  \numberwithin{figure}{section}
  \numberwithin{table}{section}
\numberwithin{remark}{section}
\numberwithin{lemma}{section}
\numberwithin{theorem}{section}
\numberwithin{equation}{section}
\numberwithin{proposition}{section}
\numberwithin{lemma}{section}
\numberwithin{assumption}{section}
\renewcommand\normalsize{%
   \@setfontsize\normalsize\@xpt\@xiipt
   \abovedisplayskip 7\p@ \@plus2\p@ \@minus5\p@
   \abovedisplayshortskip 6\z@ \@plus3\p@
   \belowdisplayshortskip 6\p@ \@plus3\p@ \@minus3\p@
   \belowdisplayskip 7\p@ \@plus2\p@ \@minus5\p@
   \let\@listi\@listI}
\newcommand{\blue}[1]{{\color{blue}#1}}
\newcommand{\mg}[1]{{\color{magenta}#1}}
\def\C{\mathbb C}
\DeclareMathOperator\diag{{diag}}
\DeclareMathOperator\re{{Re}}
\DeclareMathOperator\im{{Im}}
\def\dg{diagonalization }
\newcommand{\T}{\mathsf{T}}
\newcommand{\CB}{{\mathbb{B}}}
\newcommand{\CO}{{\mathcal{O}}}
\newcommand{\IR}{{\mathbb{R}}}
\newcommand{\IC}{{\mathbb{C}}}
\newcommand{\tol}{{\texttt{tol}}}
\def\a{\alpha}
\def\b{\beta}
\def\d{\delta}
\def\l{\lambda}
\def\s{\sigma}
\def\th{\theta}
\def\d{\delta}
\def\th{\theta}
\newcommand{\tn}[1]{\ \textnormal{#1}\ }
\newcommand{\bmt}{\left[ \begin{array}{ccccccccccccccccccccccccccccccccccccc}}
	\newcommand{\emt}{\end{array}\right]}
\newcommand{\bmtx}{\left[ \begin{array}{c|cccccccccccccccccccccccccccccccccccc}}
	\newcommand{\emtx}{\end{array}\right]}
\begin{document}

\title{A  {well-conditioned} direct PinT  algorithm   for  first- and second-order evolutionary equations}


\author{Jun Liu \and Xiang-Sheng Wang\and    Shu-Lin Wu \and Tao Zhou 
}

\institute{J. Liu \at Department of Mathematics and Statistics, Southern Illinois University Edwardsville, Edwardsville, IL 62026, USA. \email{juliu@siue.edu}\\
\and X. Wang \at Department of Mathematics, University of Louisiana at Lafayette, Lafayette, LA 70503, USA. \email{xswang@louisiana.edu}
\and S. L. Wu  (corresponding author) \at
              School of Mathematics and Statistics, Northeast Normal University, Changchun 130024, China\\
              \email{wushulin84@hotmail.com}           
              \and
              T. Zhou \at LSEC, Institute of Computational Mathematics and Scientific/Engineering Computing, AMSS, Chinese Academy of Sciences, Beijing, 100190, China. \email{tzhou@lsec.cc.ac.cn}
}

\maketitle

\begin{abstract}
{In this paper, we study a direct parallel-in-time (PinT) algorithm for {first- and second-order  time-dependent differential equations}. We use a second-order boundary value method as the time  integrator. Instead of solving the corresponding all-at-once system iteratively  we  diagonalize  the time discretization matrix $B$, which     yields   a direct parallel implementation across all time \mg{steps}.  A crucial issue of this methodology is  how the condition number (denoted by ${\rm Cond}_2(V)$) of the eigenvector matrix $V$  of  $B$ behaves  as     $n$  grows, where $n$  is the number of time \mg{steps}. A large condition number leads to large roundoff error in the \dg   procedure,  which could  seriously  pollute  the numerical accuracy. Based on a novel  connection between the characteristic equation and the Chebyshev polynomials, we present explicit   formulas  for  $V$ and $V^{-1}$, by which we prove  that  Cond$_2(V)=\CO(n^{2})$.  This implies that the diagonalization process is well-conditioned and the roundoff error   only increases moderately as $n$ grows and thus, compared to other    direct PinT algorithms,  a much  larger $n$  can be used to yield satisfactory parallelism.  {A fast structure-exploiting algorithm is also designed for computing the spectral diagonalization of $B$.}
 Numerical results on parallel machine are given to support our findings, where over 60 times speedup is achieved with 256 cores. }
 \keywords{Direct PinT algorithms  \and    Diagonalization technique \and Condition number \and Wave-type equations}
\end{abstract}

\section{Introduction}\label{sec1}
For time evolutionary problems, parallelization in the time direction is an active  research topic in recent years. This is driven by the fact that in modern supercomputer the number of cores (or threads) grows rapidly year by year, but in many cases one observes that the space parallelization does not bring further speedup even with more cores \cite{falgout2017multigrid}.  When such a saturation  occurs, it is natural to ask whether the time direction can be used for   further speedup or not.  The answer is positive, at least for strongly dissipative problems, for which the widely used parareal algorithm \cite{LMT01} and many other variants (e.g., the  MGRiT algorithm \cite{FF14} and the PFASST algorithm\cite{EM12}) work very well.  However, for wave propagation problems  the performance  of these {representative}   algorithms is   {unsatisfactory}, because {the convergence} rate heavily depends on the dissipativity (see \cite{W17,SRS15} for   discussions).  There are also many efforts toward ameliorating the convergence behavior of the iterative PinT  algorithms via improving  {the} coarse grid correction \cite{DM13,NT20,CH14,FC06,RK12}, but as pointed out in \cite{R18} these modified algorithms   either need   significant additional computation burden (leading to
further degradation of efficiency) or have very limited applicability.

Non-iterative (or direct) PinT algorithms are also proposed in recent years, for which the parallelism  depends on the number of time points only.  Here, we are interested in the PinT algorithm based on the \textit{diagonalization} technique, which was first proposed in 2008 by Maday and   R{\o}nquist  \cite{MR08}.  The idea can be described conveniently for linear ODE system with initial-value condition {(the nonlinear case will be addressed in Section \ref{sec2})}:
\begin{equation}\label{eq1.1}
u'(t)+Au(t)=g(t),
\end{equation}
where $u(0)=u_0\in\IR^m$ is the initial condition, {$A\in\mathbb{R}^{m\times m}$} and $g$ is a known term.  First,  we discretize the temporal derivative by a finite difference scheme (e.g., the backward-Euler method as described {below}) {with a step size $\Delta t$ and uniform time grid points $t_j=j\Delta t, j=0,1,\cdots,n$. Here and hereafter $n$ denotes the number of time points. \mg{Different from   solving these difference equations sequentially one after another},  we formulate them  into an all-at-once fully discrete linear system
\begin{equation}\label{eq1.2}
\mathcal{M}{\bm u}:=\left(B\otimes I_x+I_t\otimes A\right){\bm u}={\bm b},
\end{equation}
where {$\bm u=[u_1^\T,u_2^\T,\cdots,u_n^\T]^\T$ with $u_j\approx u(t_j)$,
$\bm b$ contains the initial condition and right-hand-side information,}
$I_x\in\mathbb{R}^{m\times m}, I_t\in\mathbb{R}^{n\times n}$ are identity matrices and $B\in\mathbb{R}^{n\times n}$ is the time discretization matrix.  Then, {assuming $B$ is diagonalizable,   i.e.,  $B=VDV^{-1}$ with $D=\text{diag}(\lambda_1,\lambda_2,\dots,\lambda_{n})$,  we can} factorize   $\mathcal{M}$ as $$\mathcal{M}=(V\otimes I_x)(D\otimes I_x+I_t\otimes {A})(V^{-1}\otimes I_x).$$
  This leads to  the following {three-step} procedure for directly solving \eqref{eq1.2}:
 \begin{equation}\label{eq1.3}
 \begin{cases}
{\bm g}=(V^{-1}\otimes I_x){\bm b}, &\text{step-(a)},\\
(\lambda_j I_x+A)w_j=g_j, ~j=1,2,\dots, n, &\text{step-(b)},\\
{\bm u}=(V\otimes I_x){\bm w}, &\text{step-(c)},
\end{cases}
\end{equation}
where ${\bm w}=(w_1^\T, w_2^\T, \dots, w_{n}^\T)^\T$ and ${\bm g}=(g_1^\T, g_2^\T, \dots, g_{n}^\T)^\T$. For the first and third steps in \eqref{eq1.3}, we only need to do matrix-vector (or matrix-matrix) multiplications that are parallelizable. The major computational cost is to solve the $n$ linear systems in step-(b), but  these linear systems are completely decoupled and therefore  can be solved in parallel by direct or iterative solvers.

The crucial question is how to efficiently and accurately diagonalize  the    time discretization    matrix $B$.  We mention that the matrix $B$ from standard time discretization may be not diagonalizable. For example, for   the backward-Euler method using a uniform step-size $\Delta t$ {the time discretization  matrix $B$ reads}
\begin{subequations}
\begin{equation}\label{eq1.4a}
B=\frac{1}{\Delta t}\begin{bmatrix}
1 &  & &\\
-1 &1 & &\\
&\ddots &\ddots &\\
& &-1 &1
\end{bmatrix},
\end{equation}
and it is clear that $B$ can not be diagonalized. (For   other time-integrators, e.g., the multistep methods, $B$ is a lower triangular Toeplitz matrix and can not be diagonalized as well.)   {To get a diagonalizable $B$, the strategy in \cite{MR08} is to use distinct step-sizes} $\{\Delta t_j\}_{j=1}^{n}$, which leads to
  \begin{equation}\label{eq1.4b}
B=\begin{bmatrix}
\frac{1}{\Delta t_1}&  & &\\
-\frac{1}{\Delta t_2} &\frac{1}{\Delta t_2} & &\\
&\ddots &\ddots &\\
& &-\frac{1}{\Delta t_{n}} &\frac{1}{\Delta t_{n}}
\end{bmatrix}.
\end{equation}
\end{subequations}
 \mg{Clearly, the matrix  $B$  in \eqref{eq1.4b} has  $n$ distinct eigenvalues and therefore it  is diagonalizable. In practice,  the condition number of the eigenvector matrix $V$   may
be   very large}.   This would be series problem, since a  large condition number  results in large  roundoff error  in the implementation of step-(a) and step-(c) of \eqref{eq1.3} due to floating point operations, which  {could} seriously   pollute  the accuracy of the obtained numerical solution.  This issue was carefully justified by Gander \textit{et al.} in \cite{GH19} and in particular
\begin{equation}\label{eq1.5}
\texttt{roundoff error}=\mathcal{O}(\epsilon {\rm Cond}_2(V)),
\end{equation}
where $\epsilon$ is the machine precision.
 In \cite{GH19}, the authors considered the geometrically increasing step-sizes $\{\Delta t_j=\Delta t_1\tau^{n-j}\}_{j=1}^n$ and with this choice  an explicit \dg of $B$ can be written down,  where  $\tau>1$ is a parameter.  However,  {it is} very difficult to make a good choice of $\tau$:  if $\tau$ gets closer to 1 the matrix $B$ tends to be non-diagonalizable and the condition number of   the eigenvector matrix $V$ becomes  very large; if $\tau$ is far greater than 1 the global discretization error will be an issue, because the step-sizes grows rapidly (exponentially) as $n$ increases. To balance the roundoff error and the discretization error,    numerical results indicate that  $n$ can   be only about 20$\sim$25 (see  the numerical results in Section \ref{sec4.1})   and therefore  the parallelism   is  limited for a large $n$.

{
Here,  we  remove this undesired   restriction on $n$ by using  a hybrid time discretization consisting of a centered finite difference  scheme  for   the first $(n-1)$ time steps   and
  an implicit Euler method for the last step, that is
\begin{equation}\label{eq1.6}
\begin{cases}
\frac{u_{j+1}-u_{j-1}}{2\Delta t}+Au_j=g_j, ~j=1,2,\dots, n-1,\\
\frac{u_{n}-u_{n-1}}{\Delta t}+Au_{n}=g_{n}.
\end{cases}
\end{equation}
Such an implicit  time discretization should not be used in a   time-stepping fashion, due to the serious stability problem. For \eqref{eq1.6}, the   all-at-once system in the form of  \eqref{eq1.2}  is   specified by
\begin{equation}\label{eq1.7}
B=\frac{1}{\Delta t}\begin{bmatrix}
0 &\frac{1}{2} & & & \\
-\frac{1}{2} &0 &\frac{1}{2}  & & \\
  &\ddots &\ddots &\ddots &\\
 &  &-\frac{1}{2}  &0   &\frac{1}{2}\\
 &  &   &-1   &1\\
\end{bmatrix},~ {\bm b}=\begin{bmatrix}\frac{u_0}{2\Delta t}+g_1\\ g_2\\ \vdots\\ g_n\end{bmatrix},~{\bm u}=\begin{bmatrix}u_1\\ u_2\\ \vdots\\ u_n\end{bmatrix},
\end{equation}
where only the initial-value $u_0$ is needed
and all time steps are solved  in one-shot manner. We mention that there are other diagonalization-based  PinT algorithms,  which   use novel  preconditioning tricks to handle the all-at-once system  \eqref{eq1.2} and perform well for large $n$; see, e.g.,  \cite{GW19,LW20,MPW18,LNS18,danieli2021spacetime,caklovic2021parallel,Palitta2021}}. \blue{For example, in \cite{Palitta2021} the author  proposed  a very efficient solution procedure based on  matrix equation formulation that exploiting the extended and rational Krylov subspace projection
techniques for the spatial operator and the circulant-plus-low-rank structure for the time discrete operator.}
These are however iterative algorithms and are not within the {scope} of this paper.
}

The  time discretization \eqref{eq1.6} is not new and according to our best knowledge it was  first proposed in 1985 by Axelsson and Verwer \cite{AV85}, where the authors studied this scheme with the aim of circumventing the well-known Dahlquist-barriers between convergence and stability  which arise in using \eqref{eq1.6} in a time-stepping mode.    In the general  nonlinear case, they proved that the numerical solutions obtained  simultaneously are of uniform second-order accuracy (see Theorem 4 in \cite{AV85}), even though the last step is a first-order scheme.  Numerical results in \cite{AV85} indicate that  the  time discretization \eqref{eq1.6} is suitable for stiff problems in both linear and nonlinear cases. Besides \eqref{eq1.6}, a very similar time discretization   investigated by Fox in 1954 \cite{F54}  and Fox and Mitchell in 1957 \cite{FM57}  appears much earlier, where   instead of the backward-Euler method the authors use the BDF2 method for the last step   in \eqref{eq1.6}:
$${\frac{3u_{n}-4u_{n-1}+u_{n-2}}{2\Delta t}+Au_{n}=g_{n}.}$$
 {For the time discretization \eqref{eq1.6},    the all-at-once system   was carefully justified by Brugnano, Mazzia and   Trigiante  in 1993 \cite{BMT93}, who focus on solving it iteratively by   constructing some effective preconditioner}.   The  implementation of the preconditioner  in \cite{BMT93} relies  on  two operations: a block odd-even cyclic reduction  of $\mathcal{M}$ and  a scaling procedure  for the resulted matrix by its   diagonal blocks. The block cyclic reduction requires matrix-matrix multiplications concerning  $A$   and the scaling requires to invert   $I_x+4\Delta t^2A$ and $I_x+2\Delta tA(I_x+\Delta tA)$. In our opinion, both operations are expensive if $A$ arises from semi-discretizing a PDE in high dimension and/or with {small} mesh sizes.
  Nowadays,   the  hybrid   time discretization \eqref{eq1.6} is a famous example  of the so-called \textit{boundary value methods} (BVMs)  \cite{BT98}, which are widely used in scientific and engineering computing.

Inspired by the pioneering work by Maday and   R{\o}nquist  \cite{MR08}, in this paper we try to solve the all-at-once system \eqref{eq1.2}  directly  (instead of iteratively as in \cite{BMT93}) based on diagonalizing the time discretization matrix $B$ in \eqref{eq1.7} as $B=VDV^{-1}$.    {By discovering a novel connection between the characteristic equation and the Chebyshev  polynomials}, we present  explicit formulas for these three matrices $V$, $V^{-1}$ and $D$. With the given formulas of $V$ and $V^{-1}$, we prove that the condition number of   $V$ satisfies Cond$_2(V)=\CO(n^{2})$  and    this implies that the roundoff error arising from the diagonalization procedure only increases moderately as $n$ grows. Hence, compared to the {algorithm}  in \cite{GH19},   a much  larger $n$  can be used to yield satisfactory parallelism in practice.  We mention that the  spectral  decomposition algorithm developed in this paper is much faster than the benchmarking algorithm   implemented by MATLAB's \texttt{eig} function.

{For second-order  problems
\begin{subequations}
\begin{equation}\label{eq1.8a}
u''(t)+Au(t)=g, ~u(0)=u_0, u'(0)=\tilde{u}_0,
\end{equation}
 we prove in Section \ref{sec2.2} that   {the}  time discretization \eqref{eq1.6} leads to a similar all-at-once system
\begin{equation}\label{eq1.8b}
 ({B^2}\otimes I_x+I_t\otimes A){\bm u}={\bm b},
\end{equation}
 \end{subequations}
 where  ${\bm b}$ is a suitable vector (see Lemma \ref{lem2.1} for details). Thus, the same \dg  of $B$ with {squared eigenvalues}, i.e., $B^2=VD^2V^{-1}$,  can be directly reused and the condition number of the eigenvector matrix is not effected. In other words, there is no essential difference for our proposed algorithms between first-order and second-order problems.     {For both the first-order and the second-order problems, we would like to mention some related fast algorithms in time that may be integrated with our proposed algorithm,
  	such as space-time discretizations\cite{Andreev_2014a,Andreev_2014b}, low-rank approximations \cite{Stoll_2015,kt_2011}, and domain decomposition \cite{Barker_2015}.
  }

The  {remainder of this paper} is organized as follows. In Section \ref{sec2} we  introduce the direct  PinT algorithm for nonlinear problems.   In Section \ref{sec3} we show details of the diagonalization of the time discretization matrix $B$ in \eqref{eq1.7}, {which plays a central role for both the linear and nonlinear cases}. Some numerical results are given in Section \ref{sec4} and we conclude this paper in Section \ref{sec5}.
{The  technical details for estimating Cond$_2(V)$   are given  in Appendix A and a fast algorithm with $\mathcal{O}(n^2)$  complexity for stably  computing $V^{-1}$ is described in Appendix B}.
\section{The PinT algorithm for nonlinear problems}\label{sec2}
In this section, we introduce the time discretization and the  diagonalization-based PinT algorithm for nonlinear problems. We will consider   differential equations with first- and second-order  temporal derivatives separately.  \subsection{First-order problems}\label{sec2.1}
We first consider the following first-order problem
\begin{equation}\label{eq2.1}
u'(t)+f(u(t))=0, u(0)=u_0,
\end{equation}
where $t\in(0, T)$, $u(t)\in\mathbb{R}^m$ and $f: (0, T)\times\mathbb{R}^m\rightarrow\mathbb{R}^m$.
This is an ODE problem, but the algorithm described below is also directly applicable to semi-discretized time-dependent PDEs. For example, \eqref{eq2.1}   corresponds to   the heat {equation}   by letting $f(u)=Au$ with $A\in\mathbb{R}^{m\times m}$ being the discrete  matrix of the negative Laplacian $-\Delta$ by any  discretization (e.g, finite difference or finite element).  Similarly,   the second-order problems considered in subsection \ref{sec2.2}  corresponds to the wave {equation} upon semi-discretization in space.

For \eqref{eq2.1}, similar to  \eqref{eq1.6} the   time discretization scheme is
\begin{equation}\label{eq2.2}
 \begin{cases}
 \frac{u_{j+1}-u_{j-1}}{2\Delta t}+f(u_j)=0, ~j=1,2,\dots, n-1,\\
    \frac{u_{n}-u_{n-1}}{\Delta t}+f(u_{n})=0,
 \end{cases}
\end{equation}
where   the last step is the first-order backward-Euler scheme.
The all-at-once system of  \eqref{eq2.2}  is
 {\begin{equation}\label{eq2.3}
  (B\otimes I_x){\bm u}+F({\bm u})={\bm b},
 \end{equation}
 where $F({\bm u})=(f^\T(u_1), f^\T(u_2),\dots, f^\T(u_n))^\T$ and ${\bm b}=(u_0^\T/(2\Delta t), 0, \dots, 0)^\T$.}
{Applying the standard Newton's iteration   to \eqref{eq2.3} leads to}
   \begin{equation*}
   (B\otimes I_x+ \nabla F({\bm u}^k))({\bm u}^{k+1}-{\bm u}^k)={\bm b}-((B\otimes I_x){\bm u}^k+ F({\bm u}^k)),
 \end{equation*}
 i.e.,
   \begin{equation}\label{eq2.4}
   (B\otimes I_x+ \nabla F({\bm u}^k)){\bm u}^{k+1}={\bm b}+ \left(\nabla F({\bm u}^k){\bm u}^k-F({\bm u}^k)\right),
 \end{equation}
 where $k\geq0$ is the iteration index and $\nabla F({\bm u}^k)={\rm \mathtt{blkdiag}}(\nabla f(u_1^k),   \dots, \nabla f(u_{n}^k))$ {consists of the Jacobian matrix $\nabla f(u_j^k)$ as the $j$-th block}.  {To make the \dg technique  still applicable, we have to replace (or approximate) all the blocks $\{\nabla f(u_j^k)\}$   by a single matrix $A_k$.  Following the interesting idea in \cite{GH17},   we consider the following}
 averaged Jacobian matrix\footnote[1]{{An alternative way of deriving such an aggregated Jacobian matrix is to take the average of unknowns  instead:
 		$
 		 A_k=\nabla f\left (\frac{1}{n}{\sum}_{j=1}^{n} u_j^k\right ),
 		$
 		which is omitted since it shows similar convergence performance in numerical experiments.
 }}
 {$$
A_k:=\frac{1}{n}{\sum}_{j=1}^{n}\nabla f(u_j^k).
 $$}
 Then,   we  get a {simple Kronecker-product} approximation of $\nabla F({\bm u}^k)$ as
 $$
 \nabla F({\bm u}^k)\approx I_t\otimes  {A_k}.
 $$
 By substituting this into \eqref{eq2.4}, we arrive at the simplified Newton iteration  (SNI):
    \begin{equation}\label{eq2.5}
   (B\otimes I_x+ I_t\otimes  {A_k}){\bm u}^{k+1}={\bm b}+ \left((I_t\otimes  {A_k}){\bm u}^k-F({\bm u}^k)\right).
 \end{equation}
 Convergence of SNI is well-known; see, e.g., \cite[Theorem 2.5]{D04} and \cite{OR00}.  The SNI  was also used as an inner iteration for the inexact Uzawa method \cite{NS19} and the Krylov subspace method \cite{LW20}.

 {With the same structure, the Jacobian system \eqref{eq2.5} in each SNI can also be solved parallel in time.}
 If  $B$ is diagonalized  as $B=VDV^{-1}$, we can solve ${\bm u}^{k+1}$ in  \eqref{eq2.5} as
  \begin{equation}\label{eq2.6}
 \begin{cases}
{\bm g}=(V^{-1}\otimes I_x){\bm r}^k, &\text{step-(a)},\\
(\lambda_j I_x+A_k)w_j=g_j, ~j=1,2,\dots, n, &\text{step-(b)},\\
{\bm u}^{k+1}=(V\otimes I_x){\bm w}, &\text{step-(c)},
\end{cases}
\end{equation}
where   ${\bm r}^k={\bm b}+ \left((I_t\otimes {A_k}-F({\bm u}^k)\right)$.  In the linear case, i.e., $f(u)=Au$,  we have $ A_k=A$ and ${\bm r}^k={\bm b}$ and therefore \eqref{eq2.6} reduces to \eqref{eq1.3}. {In the parallel experiments in Section 4 (implemented with MPI and run by Slurm Workload Manager),  $n$ is an integer multiple of the number of processors and the workload is quite evenly distributed for each processor because  the linear systems are of same size.}

 \subsection{Second-order problems}\label{sec2.2}
 We next consider the  following second-order differential equation
 \begin{equation}\label{eq2.7}
u''(t)+f(u(t))=0, u(0)=u_0, u'(0)=\tilde{u}_0, ~t\in(0, T).
\end{equation}
For  discretization we { let $v(t)=u'(t)$ and} make an {\em order-reduction}    by rewriting \eqref{eq2.7} as   {\begin{equation}\label{eq2.8}
w'(t):=\begin{bmatrix}
u(t)\\
v(t)
\end{bmatrix}'=
\begin{bmatrix}
v(t)\\
-f(u(t))
\end{bmatrix}=:g(w),\quad 
w'(0):=\begin{bmatrix}
	u(0)\\
	v(0)
\end{bmatrix}=
\begin{bmatrix}
	u_0\\
	\tilde{u}_0
\end{bmatrix}.
\end{equation}
}
Then, similar to \eqref{eq2.2}, the same time discretization scheme leads to
\begin{equation}\label{eq2.9}
 \begin{cases}
 \frac{w_{j+1}-w_{j-1}}{2\Delta t}+g(w_j)=0,~j=1,2,\dots, n-1, \\
   \frac{w_{n}-w_{n-1}}{\Delta t}+g(w_{n})=0.
 \end{cases}
\end{equation}
{Clearly, for \eqref{eq2.9}} the all-at-once system is of the same form as in \eqref{eq2.3} and the \dg   procedure \eqref{eq2.6} is directly applicable.   \mg{However} one can imagine that  the storage requirement for the space variables doubles at each time point  and  this would be undesirable if the second-order problem \eqref{eq2.7} arises from semi-discretizing a PDE in high dimension and/or with {small} mesh sizes.
 {We can avoid this by  representing the all-at-once systems  for ${\bm u}=(u_1,u_2,\dots, u_n)^\T$ only.  \begin{lemma}[all-at-once system for ${\bm u}$]\label{lem2.1}
The vector  ${\bm u}=(u_1^\T, \dots, u_n^\T)^\T$  specified by the time discretization \eqref{eq2.9} satisfies
\begin{equation}\label{eq2.10}
 (B^2\otimes I_x){\bm u}+F({\bm u})={\bm b},
 \end{equation}
 where $B$ is the matrix defined by \eqref{eq1.7} and ${\bm b}=
		 \left(\frac{\tilde{u}_0^\T}{2\Delta t}, -\frac{u_0^\T}{4\Delta t^2} ,  0, \dots, 0\right)^\T$.
 \end{lemma}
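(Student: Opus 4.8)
The plan is to expand the order-reduced all-at-once system \eqref{eq2.9} into its $u$-block and its $v$-block and then eliminate the auxiliary velocity unknown $\bm v=(v_1^\T,\dots,v_n^\T)^\T$. First I would write \eqref{eq2.9} componentwise in $w_j=(u_j^\T,v_j^\T)^\T$, with initial data $w_0=(u_0^\T,\tilde u_0^\T)^\T$: the first component collects the discrete form of $u'=v$, and the second collects the discrete form of $v'=-f(u)$. Away from the first node, the rows of the centered/Euler stencil applied to $(u_1,\dots,u_n)$ coincide exactly with the rows of $B\otimes I_x$, with $B$ as in \eqref{eq1.7}; at the first node the only term absent from $B\otimes I_x$ is the one carrying $u_0$. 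Hence, moving $u_0$ (resp.\ $\tilde u_0$) to the right-hand side, the two blocks read
\[
(B\otimes I_x)\bm u=\bm v+\bm e_1\otimes\frac{u_0}{2\Delta t},\qquad
(B\otimes I_x)\bm v=-F(\bm u)+\bm e_1\otimes\frac{\tilde u_0}{2\Delta t},
\]
where $\bm e_1$ denotes the first canonical vector of $\R^n$.

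Next I would solve the first identity for $\bm v$ and substitute it into the second. Using $(B\otimes I_x)(B\otimes I_x)=B^2\otimes I_x$ and rearranging, this yields
\[
(B^2\otimes I_x)\bm u+F(\bm u)=\bm e_1\otimes\frac{\tilde u_0}{2\Delta t}+(B\otimes I_x)\!\left(\bm e_1\otimes\frac{u_0}{2\Delta t}\right).
\]
To identify the right-hand side with the claimed $\bm b$ it remains only to evaluate the last term: $B\bm e_1$ is the first column of the matrix in \eqref{eq1.7}, i.e.\ $B\bm e_1=-\tfrac{1}{2\Delta t}\bm e_2$, so $(B\otimes I_x)(\bm e_1\otimes u_0/(2\Delta t))=-\bm e_2\otimes u_0/(4\Delta t^2)$. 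Adding the two contributions gives $\bm b=\bigl(\tilde u_0^\T/(2\Delta t),\,-u_0^\T/(4\Delta t^2),\,0,\dots,0\bigr)^\T$, which is the claimed right-hand side.

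No step here is a genuine obstacle: the lemma is a direct elimination of $\bm v$ combined with one Kronecker-product identity. The only point demanding care is the handling of the initial data: the three-point stencil at the first time step couples $u_2$ with $u_0$ and $v_2$ with $v_0$, so $u_0$ and $\tilde u_0$ must each be transferred to the right-hand side of their block; and, crucially, applying $B\otimes I_x$ a second time transports the $u_0$-contribution from the first entry of the right-hand side into the second entry, which is exactly the origin of the $-u_0^\T/(4\Delta t^2)$ component of $\bm b$. (The first-order all-at-once system \eqref{eq2.3} is recovered as the degenerate case with no velocity block.)
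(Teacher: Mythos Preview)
Your proposal is correct and follows essentially the same approach as the paper: split \eqref{eq2.9} into its $u$- and $v$-blocks, eliminate $\bm v$, and then compute $(B\otimes I_x)\bigl(\bm e_1\otimes u_0/(2\Delta t)\bigr)$ to identify $\bm b$. The paper's proof is slightly terser (it names the two right-hand sides $\bm b_1,\bm b_2$ and leaves the final ``routine calculation'' of $\bm b_2+(B\otimes I_x)\bm b_1$ to the reader), whereas you spell out $B\bm e_1=-\tfrac{1}{2\Delta t}\bm e_2$ explicitly.
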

 \begin{proof}
 Since $w_j=(u_j^\T, v_j^\T)^\T$, from \eqref{eq2.9} we can represent $\{u_j\}$ and $\{v_j\}$ separately as
 \begin{equation*}
 \begin{split}
&\begin{cases}
 \frac{u_{j+1}-u_{j-1}}{2\Delta t}-v_j=0,~j=1,2,\dots, n-1, \\
   \frac{u_{n}-u_{n-1}}{\Delta t}-v_{n}=0,
 \end{cases}\\
  &\begin{cases}
 \frac{v_{j+1}-v_{j-1}}{2\Delta t}+f(u_j)=0,~j=1,2,\dots, n-1, \\
   \frac{v_{n}-v_{n-1}}{\Delta t}+f(u_{n})=0.
 \end{cases}
 \end{split}
 \end{equation*}
Hence, with the matrix $B$ given by \eqref{eq1.7} we have  	
 \begin{equation}\label{eq2.11}
		(B\otimes I_x){\bm u}-{\bm v}={\bm b}_1, ~(B\otimes I_x){\bm v}+F({\bm u})={\bm b}_2,
	\end{equation}
	 where ${\bm v}=(v_1^\T, \dots, v_n^\T)^\T$, ${\bm b}_1=(\frac{u_0^\T}{2\Delta t}, 0, \dots, 0)^\T$ and
${\bm b}_2=(\frac{\tilde{u}_0^\T}{2\Delta t}, 0, \dots, 0)^\T$.  From the first equation in \eqref{eq2.11} we have
${\bm v}=(B\otimes I_x){\bm u}-{\bm b}_1$ and   substituting this into the second equation  gives  $(B\otimes I_x)^2{\bm u}+F({\bm u})={\bm b}_2+(B\otimes I_x){\bm b}_1$.  A routine calculation yields ${\bm b}_2+(B\otimes I_x){\bm b}_1={\bm b}$ and this together with     $(B\otimes I_x)^2=B^2\otimes I_x$  gives  the desired result  \eqref{eq2.10}.
\end{proof}

\mg{ If $f(u)=Au$, we have $F({\bm u})=(f^\T(u_1), \dots, f^\T(u_n))^\T=(I_t\otimes A){\bm u}$ and thus the all-at-once system \eqref{eq2.10} for ${\bm u}$ becomes   $ (B^2\otimes I_x+I_t\otimes A){\bm u}={\bm b}$, which gives  \eqref{eq1.8b}.}
Clearly, {$B^2$ is  diagonalizable as  $B^2=VD^2 V^{-1}$} given $B=VDV^{-1}$}. Based on this relationship, it is clear that the above PinT algorithm \eqref{eq2.6} is also applicable to \eqref{eq2.11} and the details are  omitted.
Hence, for the diagonalization-based PinT algorithm the computational cost of second-order problems is the same as the first-order ones.
  \section{Diagonalization of the time discretization matrix $B$}\label{sec3}
  For both the linear and nonlinear problems, {it is clear that} the \dg  of $B=VDV^{-1}$ plays a central role in the PinT algorithm. In this section, we will prove that the matrix $B$ is indeed diagonalizable and also give  explicit formulas for   $V$ and  $V^{-1}$. By these formulas, we   {give} an estimate of the 2-norm condition number of $V$, {i.e., Cond$_2(V)=\CO(n^2)$}, which is critical to  control    the roundoff error  in practical computation (cf. \eqref{eq1.5}).

For notational simplicity, we consider the diagonalization of the re-scaled matrix
$\CB= {\Delta t} B$.
Clearly,  {by diagonalizing} $\CB=V \Sigma V^{-1}$    {it holds}
 \[
 B=\frac{1}{\Delta t}\CB=V \left(\frac{1}{\Delta t}\Sigma\right) V^{-1}=VDV^{-1}.
 \]
Define two functions  $$
 T_n(x)=\cos(n\arccos x),~
 U_n(x)=\sin[(n+1)\arccos x]/\sin(\arccos x),
 $$
 {which} are respectively  the $n$-th degree Chebyshev polynomials of {the} first-   and second-kind.
 {In the following theorem  we express
 the eigenvalues and eigenvectors  of $\CB$ through the Chebyshev polynomials.
 Throughout this paper,}  ${\rm i}=\sqrt{-1}$ denotes the imaginary unit.
 \begin{theorem} \label{thm_eigsys}
 	The $n$ eigenvalue{s} of $\CB$ are    {$\l_j={\rm i} x_j$,
	 with $\{x_j\}_{j=1}^n$}  being the $n$ roots of
 	\begin{equation}\label{e-value}
 		U_{n-1}(x)-{\rm i} T_n(x)=0.
 	\end{equation}
 For each $\lambda_j$,
   the corresponding eigenvector $\bm p_j=[p_{j,0}, \cdots,p_{j,n-1}]^\T$ is given as
 	\begin{equation}\label{e-vector}
 		p_{j,k}={\rm i}^kU_k(x_j),~k=0,\cdots,n-1,
 	\end{equation}
    where $p_{j,0}=1$ is assumed for normalization.
 \end{theorem}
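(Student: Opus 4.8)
\noindent\emph{Proof strategy.}
The plan is to turn the eigenvalue problem $\CB\bm p=\lambda\bm p$ into a scalar three-term recurrence and recognise it as the Chebyshev recurrence. Write $\bm p=[p_0,\dots,p_{n-1}]^{\T}$. Reading off the rows of $\CB=\Delta t\,B$ in \eqref{eq1.7}: row $0$ gives $\tfrac12 p_1=\lambda p_0$; the interior rows $k=1,\dots,n-2$ give $p_{k+1}=2\lambda p_k+p_{k-1}$; and the last (backward-Euler) row gives $p_{n-2}=(1-\lambda)p_{n-1}$. If $p_0=0$ then row $0$ forces $p_1=0$ and the interior recurrence forces $\bm p=0$, so necessarily $p_0\neq 0$; hence the normalisation $p_0=1$ is legitimate, whereupon $p_1=2\lambda$ and the interior recurrence determines $p_0,\dots,p_{n-1}$ uniquely from $\lambda$. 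Thus $\lambda$ is an eigenvalue of $\CB$, with eigenvector this $\bm p$, precisely when $\bm p$ also satisfies the last-row relation.

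Next I would substitute $\lambda={\rm i}x$ and seek $\bm p$ in the form $p_k={\rm i}^{k}q_k$. The interior recurrence then becomes $q_{k+1}=2xq_k-q_{k-1}$, and the initial data become $q_0=1$, $q_1=2x$; these are exactly the recurrence and initial values defining $U_k$, so $q_k=U_k(x)$ and $p_k={\rm i}^{k}U_k(x)$, which is \eqref{e-vector}. Inserting this into $p_{n-2}=(1-\lambda)p_{n-1}$ and cancelling ${\rm i}^{n-2}$ gives $U_{n-2}(x)=(1-{\rm i}x)\,{\rm i}\,U_{n-1}(x)=(x+{\rm i})U_{n-1}(x)$. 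Eliminating $U_{n-2}$ through $U_{n-2}=2xU_{n-1}-U_n$ reduces this to $(x-{\rm i})U_{n-1}(x)=U_n(x)$, and the identity $T_n=U_n-xU_{n-1}$ then turns it into $-{\rm i}\,U_{n-1}(x)=T_n(x)$, i.e. $U_{n-1}(x)-{\rm i}\,T_n(x)=0$, which is \eqref{e-value}. This shows at once that every eigenpair has the claimed form and that each root $x_j$ of \eqref{e-value} produces the eigenpair $(\,{\rm i}x_j,\ \bm p_j\,)$.

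It remains to confirm there are exactly $n$ such roots, matched by the $n$ eigenvalues of the $n\times n$ matrix $\CB$; for this I would compute $\det(\lambda I-\CB)$ directly. Let $D_k(\lambda)$ be the leading principal $k\times k$ minor of $\lambda I-\CB$; expanding along its last row gives $D_k=\lambda D_{k-1}+\tfrac14 D_{k-2}$ with $D_0=1$, $D_1=\lambda$, and the substitution $\lambda={\rm i}x$ again produces the Chebyshev recurrence, so $D_k(\lambda)=2^{-k}{\rm i}^{k}U_k(-{\rm i}\lambda)$. A cofactor expansion of $\det(\lambda I-\CB)$ along its last row $[0,\dots,0,1,\lambda-1]$ gives
\[
\det(\lambda I-\CB)=\tfrac12 D_{n-2}(\lambda)+(\lambda-1)D_{n-1}(\lambda),
\]
and substituting the Chebyshev expressions and using the same two identities collapses the right-hand side to $-2^{-(n-1)}{\rm i}^{n-1}\bigl(U_{n-1}(x)-{\rm i}\,T_n(x)\bigr)$ with $x=-{\rm i}\lambda$. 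Since $U_{n-1}-{\rm i}T_n$ has degree $n$ in $x$ (the $x^n$ term comes only from $-{\rm i}T_n$, with nonzero coefficient), this equals, up to a nonzero constant, the characteristic polynomial of $\CB$; hence its $n$ roots, counted with multiplicity, are exactly $\lambda_j={\rm i}x_j$, as claimed.

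I expect the only real difficulty to be bookkeeping: keeping the powers of ${\rm i}$ and the Chebyshev index shifts consistent in both the eigenvector derivation and the determinant computation, and getting the cofactor sign on the boundary row right. The genuine distinctness of the $x_j$ — needed later for true diagonalisability of $\CB$ and for the explicit form of $V^{-1}$ — is a separate matter, which I would address in the subsequent analysis rather than here.
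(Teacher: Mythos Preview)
Your argument is correct. The overall architecture matches the paper's proof---read off the three-term recurrence and the boundary relation from $\CB\bm p=\lambda\bm p$, normalise $p_0=1$, and identify the resulting sequence with Chebyshev polynomials of the second kind---but your execution is more direct in two places. First, the paper makes the substitution $\lambda={\rm i}\cos\theta$, $y={\rm i}e^{{\rm i}\theta}$, writes the general solution $p_k=c_1y^k+c_2(-y)^{-k}$ of the difference equation, fits $c_1,c_2$ to the initial data, and only then recognises the result as ${\rm i}^kU_k$; you bypass this by substituting $\lambda={\rm i}x$, $p_k={\rm i}^kq_k$ and observing that the recurrence and initial data for $q_k$ are \emph{exactly} those of $U_k$. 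Second, the paper simply asserts that \eqref{e-value} is a degree-$n$ polynomial equation in $\lambda$, whereas you actually compute the characteristic polynomial of $\CB$ via the cofactor expansion and verify it equals $-2^{-(n-1)}{\rm i}^{n-1}\bigl(U_{n-1}(x)-{\rm i}T_n(x)\bigr)$; this is a genuine addition, since it confirms that the eigenvalues are precisely the roots of \eqref{e-value} with the same multiplicities, not merely contained among them. Your closing remark that simplicity of the $x_j$ is deferred is exactly right: the paper handles that separately in Theorem~\ref{thm_eigs}.
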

\begin{proof}
Let $\l\in\IC$ be an eigenvalue of $\CB$ and $\bm p=[p_0,p_1,\cdots,p_{n-1}]^\T\ne 0$ {the}  corresponding eigenvector.	
By definition we have $\CB\bm p=\l \bm p$, i.e.,
 \begin{equation}
 \begin{cases}
 	\l p_0=\frac{p_1}{2},\\
 	\l p_1=-\frac{p_0}{2}+\frac{p_2}{2},\\
 	\vdots\\
 	\l p_{n-2}=-\frac{p_{n-3}}{2}+\frac{p_{n-1}}{2},\\
 	\l p_{n-1}=-p_{n-2}+p_{n-1}.
 \end{cases}
	\end{equation}
 Obviously, $p_0\neq0$; otherwise, $p_1=\cdots=p_{n-1}=0$. Without loss of generality, we may assume $p_0=1$.
{Clearly,}  $p_k$ is a polynomial of $\l$ with degree $k$. Moreover,
 $p_1=2\l$ and the recursion
 \begin{equation}\label{pk-de}
 	2\l p_{k-1}=p_k-p_{k-2},
 \end{equation}
 holds for $k=2,\cdots,n-1$, and the last equation gives
 \begin{equation}\label{pn}
 	(1-\l)p_{n-1}=p_{n-2}.
 \end{equation}
 Let $\l=\frac{1}{2}(y-\frac{1}{y})={\rm i}\cos\th$ with $y={\rm i}e^{{\rm i}\th}$.
The general solution of the difference equation \eqref{pk-de} is
 \begin{equation}
 	p_k=c_1y^k+c_2(-y)^{-k}.
 \end{equation}
 Making use of the initial conditions $p_0=1$ and $p_1=2\l=y-y^{-1}$, we have
 \begin{align*}
 	c_1+c_2 =1, ~
 	c_1y-c_2y^{-1} =y-y^{-1},
 \end{align*}
 which gives
 	$c_1 ={y\over y+y^{-1}}$ and
 	$c_2 ={y^{-1}\over y+y^{-1}}$.
 Therefore, with {$y={\rm i}e^{{\rm i}\th}$ we get}
 \begin{equation}\label{pk}
 	p_k={y^{k+1}+(-1)^ky^{-(k+1)}\over y+y^{-1}}={{\rm i}^k\sin[(k+1)\th]\over\sin\th},~{k=0,\cdots,n-1}.
 \end{equation}
 In view of $\l={\rm i}\cos\th$, we rewrite \eqref{pn} as
 $$(1-{\rm i}\cos\th){{\rm i}^{n-1}\sin(n\th)\over\sin\th}={{\rm i}^{n-2}\sin[(n-1)\th]\over\sin\th},$$
 which is equivalent to
 \begin{equation}\label{th-eq}
 	{\sin(n\th)\over\sin\th}={\rm i}\cos(n\th).
 \end{equation}
 This is a polynomial equation of $\l={\rm i}\cos\th$ with degree $n$ {because $\sin(n\th)/\sin\th=U_{n-1}(-i\l)$ and $\cos(n\th)=T_n(-i\l)$ are polynomials of $\l$ with degrees $n-1$ and $n$, respectively.}

 Denote $\l={\rm i}x$ with $x=\cos\theta$ (i.e. $\theta=\arccos x$). It follows from \eqref{pk} and \eqref{th-eq} that
 \begin{equation}\label{e-vector2}
 	p_k={\rm i}^kU_k(x),~k=0,1,\dots, n-1,
 \end{equation}
  and
 \begin{equation}\label{e-value2}
 	U_{n-1}(x)-{\rm i}T_n(x)=0.
 \end{equation}
 {The $n$ roots $x_1,x_2,\cdots,x_n$ of   \eqref{e-value2} give the $n$ eigenvalues $\lambda_j={\rm i} x_j$ of $\CB$, and the formula \eqref{e-vector2} evaluated} at each $x_j$ then provides the corresponding eigenvector.\end{proof}

Based on the above Theorem \ref{thm_eigsys},
{we can further  prove that $\CB$ is indeed diagonalizable, since  its  eigenvalues are all distinct}.

\begin{theorem} \label{thm_eigs}
 All $n$ roots of $U_{n-1}(x)-{\rm i}T_n(x)=0$ are simple, complex with negative imaginary parts, and have modulus less than $1+1/\sqrt{2n}$.
Moreover, if $x$ is a root, then so is $-\bar x$.

\end{theorem}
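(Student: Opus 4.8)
The plan is to analyze the equation directly in the angular variable $\theta$ via the substitution $x=\cos\theta$, since the defining equation \eqref{th-eq} reads $\sin(n\theta)/\sin\theta = {\rm i}\cos(n\theta)$, equivalently $\sin(n\theta) = {\rm i}\,\sin\theta\,\cos(n\theta)$. First I would establish the symmetry claim, which is the cheapest: if $x$ is a root then $\theta\mapsto\pi-\theta$ sends $\cos\theta\mapsto-\cos\theta$, and one checks from the parity of $T_n$ and $U_{n-1}$ (namely $T_n(-x)=(-1)^nT_n(x)$, $U_{n-1}(-x)=(-1)^{n-1}U_{n-1}(x)$) that $-x$ solves $U_{n-1}(x)+{\rm i}T_n(x)=0$; conjugating and using that $T_n,U_{n-1}$ have real coefficients then shows $-\bar x$ solves the original equation. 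So the root set is invariant under $x\mapsto-\bar x$, i.e. symmetric about the imaginary axis.

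Next, simplicity and the negative-imaginary-part claim. For simplicity I would argue that a double root $x_0$ would force both $U_{n-1}(x_0)-{\rm i}T_n(x_0)=0$ and its derivative $U_{n-1}'(x_0)-{\rm i}T_n'(x_0)=0$ to vanish; using the standard identities relating $T_n',U_{n-1}',T_n,U_{n-1}$ (e.g. $T_n'=nU_{n-1}$ and $(1-x^2)U_{n-1}'=\dots$) one can eliminate to get a polynomial relation that $T_n(x_0)$ and $U_{n-1}(x_0)$ must simultaneously satisfy, leading to a contradiction except possibly at $x_0=\pm1$, which are easily checked not to be roots. (Alternatively: a cleaner route is that the eigenvalues $\lambda_j={\rm i}x_j$ of the $n\times n$ matrix $\CB$, if all $n$ of them were distinct, would already exhibit $n$ distinct simple roots; but logically simplicity is what we want to prove, so I'd prefer the direct Wronskian-type argument.) For the location, write $\theta=\alpha+{\rm i}\beta$; the equation $\sin(n\theta)=\mathrm i\sin\theta\cos(n\theta)$ can be massaged — e.g. by adding ${\rm i}$ times itself appropriately, or by noting it is equivalent to $e^{2{\rm i}n\theta}=\dfrac{1+{\rm i}\sin\theta\cdot(\text{something})}{\dots}$ — into a form $|e^{2{\rm i}n\theta}|$ versus $1$ from which $\mathrm{sign}(\beta)$ is forced. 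Concretely, rewriting gives a relation of the shape $e^{2\mathrm i n\theta}= -\dfrac{1-\sin\theta}{1+\sin\theta}$ or similar after combining $\sin,\cos$ into exponentials; taking modulus and using $|{\rm Im}(\sin\theta)|$ monotonicity in $\beta$ pins down $\beta<0$, hence $x=\cos\theta$ has the asserted sign of imaginary part (a short check fixes whether it is $\mathrm{Im}\,x<0$ or the statement is about $\theta$).

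Finally, the modulus bound $|x|<1+1/\sqrt{2n}$. Here I would combine the two pieces: from $|e^{2{\rm i}n\theta}|=e^{-2n\beta}$ equalling the modulus of the right-hand rational expression in $\sin\theta$, one extracts a quantitative bound on $\beta$ (how far into the lower half-plane $\theta$ can sit), of size $O(1/n)$; then $x=\cos\theta=\cos\alpha\cosh\beta - {\rm i}\sin\alpha\sinh\beta$ gives $|x|^2 \le \cosh^2\beta = 1+\sinh^2\beta$, and plugging the $\beta=O(1/n)$ bound yields $|x|^2\le 1+O(1/n)$, which one sharpens to the stated constant $1/\sqrt{2n}$ by tracking constants carefully (this is the routine-calculation part I would not grind through here). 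The main obstacle I anticipate is precisely getting the constant in the modulus bound right: the qualitative statements (simple, lower half-plane, $|x|=1+O(1/\sqrt n)$) follow from fairly standard manipulations, but producing the explicit $1+1/\sqrt{2n}$ requires a careful, slightly delicate estimate of $\beta$ as a function of $n$ — likely an inequality like $|\sin\theta|\le \tanh(n\beta)$ or a fixed-point/monotonicity argument on $\beta$ — and matching it against $\sinh^2\beta\le 1/(2n)$; that bookkeeping is where the real work lies, and it is also where one must be most careful that the chosen branch of $\arccos$/$\theta$ is consistent with the eigenvector normalization $p_{j,0}=1$ used earlier.
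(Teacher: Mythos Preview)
Your overall strategy is in the right neighborhood, but the paper takes a noticeably cleaner route and several of your steps, as written, contain genuine gaps.

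\textbf{The key substitution you are missing.} The paper does \emph{not} work in the angular variable $\theta$ for this theorem; instead it sets $y=x+\sqrt{x^2-1}$ for $x\in\mathbb C\setminus[-1,1]$ (so $|y|>1$ and $x=\tfrac12(y+y^{-1})$) and reduces the characteristic equation to the algebraic identity
\[
y^{2n}=-\frac{(y-{\rm i})^2}{(y+{\rm i})^2}.
\]
From $|y|>1$ this immediately gives $|y-{\rm i}|>|y+{\rm i}|$, hence $\mathrm{Im}\,y<0$ and then $\mathrm{Im}\,x<0$ in one line. Your $\theta$-based version, $e^{2{\rm i}n\theta}=\dfrac{1+{\rm i}\sin\theta}{1-{\rm i}\sin\theta}$ (your displayed formula is slightly off), is essentially the same identity after the change of variable $y=e^{\pm{\rm i}\theta}$, but you then have to untangle the relation between the signs of $\mathrm{Im}\,\theta$, $\mathrm{Re}(\sin\theta)$, and $\mathrm{Im}\,x$, which is exactly the uncertainty you flag. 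The $y$-formulation avoids all of that.

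\textbf{The modulus bound.} This is where the paper's substitution really pays off: from $|y|^{2n}\le (|y|+1)^2/(|y|-1)^2$ one sets $y_1=|y|-1$ and obtains $2+y_1\ge y_1(1+y_1)^n\ge y_1+ny_1^2$, hence $y_1\le\sqrt{2/n}$ and $|x|<\tfrac12(|y|+1)\le 1+1/\sqrt{2n}$, with the explicit constant falling out in three lines. Your route via $|x|^2\le\cosh^2\beta$ can be made to work (and would in fact give a tighter asymptotic bound), but your claimed size $\beta=O(1/n)$ is not correct uniformly over all roots: the largest $\beta_j$ behaves like $(\ln n)/n$, so a bootstrap is needed, and extracting the precise constant $1/\sqrt{2n}$ by that path is genuinely more work than you suggest.

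\textbf{Simplicity.} Your elimination sketch is vague. The paper's argument is short but relies on a specific identity you do not invoke: the Pythagorean relation $T_n^2(x)+(1-x^2)U_{n-1}^2(x)=1$, which at a root forces $x^2T_n^2(x)=1$. Since $(1-x^2)(U_{n-1}-{\rm i}T_n)(U_{n-1}+{\rm i}T_n)=1-x^2T_n^2(x)$, a double root of $U_{n-1}-{\rm i}T_n$ would also be a double root of $x^2T_n^2(x)-1$; differentiating and using $T_n'=nU_{n-1}$ collapses this to $x={\rm i}/n$, contradicting $\mathrm{Im}\,x<0$. Without that identity (or an equivalent), your ``standard identities \dots\ leading to a contradiction'' is not yet a proof.

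The symmetry claim $x\mapsto -\bar x$ is handled the same way in both approaches and is fine.
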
	
\begin{proof}
	{From    \eqref{e-value},  it is clear that}  $U_{n-1}(x)-{\rm i}T_n(x)=0$ has no real roots.
	Define $y=x+\sqrt{x^2-1}$ for $x\in\C\setminus[-1,1]$. It holds  $x=\frac{1}{2}(y+\frac{1}{y})$ and $|y|>1$.
	Moreover,
	$$
	T_n(x)=\frac{1}{2}(y^n+y^{-n}),~
	U_{n-1}(x)=\frac{y^n-y^{-n}}{y-y^{-1}}.
	$$
	Thus, if $U_{n-1}(x)-{\rm i}T_n(x)=0$,  we have $(y^n-y^{-n})/(y^n+y^{-n})=(y-y^{-1})/(-2{\rm i})$, which {gives}
	\begin{equation}\label{y-eq}
		y^{2n}={-2{\rm i}+y-y^{-1}\over-2{\rm i}-y+y^{-1}}=-{y^2-2{\rm i}y-1\over y^2+2{\rm i}y-1}=-{(y-{\rm i})^2\over(y+{\rm i})^2}.
	\end{equation}
	Since $|y|>1$, {the above equation implies that} $|y-{\rm i}|>|y+{\rm i}|${, which gives} $\im y<0$.
	Consequently,
	\begin{equation}
		\im x={\im y-\im y/|y|\over 2}<0.
	\end{equation}
	Moreover, it follows from \eqref{y-eq} that
	$$|y|^{2n}={|y-{\rm i}|^2\over|y+{\rm i}|^2}\le{(|y|+1)^2\over(|y|-1)^2}.$$
	Let $y_1=|y|-1>0$. We have
	$$2+y_1\ge y_1(1+y_1)^n\ge y_1(1+ny_1)=y_1+ny_1^2,$$
	which implies  $y_1\le\sqrt{2/n}$.
	{Thus}, $|x|<{|y|+1\over 2}\le 1+{1\over\sqrt{2n}}$. 	If $x$ is a root, then
	$$U_{n-1}(-\bar x)=(-1)^{n-1}\bar U_{n-1}(x)=(-1)^{n-1}(-{\rm i})\bar T_n(x)={\rm i}T_n(-\bar x),$$
	which implies that $-\bar x$ is also a root.
	A simple application of Pythagorean theorem yields
    {
    $$
    T_n^2(x)+(1-x^2)U_{n-1}^2(x)=\cos^2(n\th)+\sin^2(n\th)=1,
    $$
    for $x=\cos\th\in(-1,1)$. Since the left-hand side of the above equation is the sum of two polynomials in $x$, we have for all complex $x$,}
	\begin{equation}
		T_n^2(x)+(1-x^2)U_{n-1}^2(x)=1.
	\end{equation}
	Hence, if $x$ is a root of $U_{n-1}(x)-{\rm i}T_n(x)=0$,  it holds   $x^2T_n^2(x)=1$. 	If $x$ {is} a repeated root, then
	$$2xT_n^2(x)+2x^2T_n(x)T_n'(x)=0.$$
	Since $T_n'(x)=nU_{n-1}(x)$, we have
	$$T_n(x)=-nxU_{n-1}(x)=-{\rm i}nxT_n(x),$$
	which implies   $x={\rm i}/n$ {and this} contradicts to the fact that $\im x<0$.
\end{proof}

By Theorem  \ref{thm_eigs},  the  eigenvectors  of $\CB$  are linearly independent and so $\CB$ indeed is diagonalizable.
Denote the \dg of $\CB$  by  $\CB=V\Sigma V^{-1}$
with $\Sigma=\diag\left(\l_1, \cdots,\l_n\right)$ and
\begin{equation}  \label{Vformula}
 V=[\bm p_1,\bm p_2, \cdots, \bm p_n]= \underbrace{\diag\left({\rm i}^0,{\rm i}^1,\cdots,{\rm i}^{n-1}\right)}_{:=\mathbf{I}}
 \underbrace{\bmt
 	U_0(x_1)&\cdots & U_0(x_n) \\
   	\vdots &\cdots   &\vdots \\
 	U_{n-1}(x_1) &\cdots & U_{n-1}(x_n) \\
 	\emt}_{:=\Phi}=\mathbf{I}\Phi,
\end{equation}
where  {$\{\lambda_j\}_{j=1}^n$ and $\{x_j\}_{j=0}^{n-1}$ are  specified by Theorem \ref{thm_eigsys}.  In \eqref{Vformula},
$\mathbf{I}$ is a unitary matrix and $\Phi$} is a Vandermonde-like matrix \cite{higham2002accuracy} {defined by} the Chebyshev orthogonal polynomials.
{Hence, it holds}
\begin{equation}  \label{eq3.15}
{\rm Cond}_2(V)={\rm Cond}_2(\mathbf{I}\Phi)={\rm Cond}_2(\Phi).
\end{equation}
{The following theorem proves that   ${\rm Cond}_2(V)=\mathcal{O}(n^2)$,
which implies that the roundoff error  from diagonalization procedure only increases moderately as $n$ grows (cf. \eqref{eq1.5}).
Such {a quadratic growth rate} of   ${\rm Cond}_2(V)$}
is crucial to achieve a {satisfactory parallelism}  in time.

\begin{theorem} \label{thmCondU}
	 For $n\ge 8$, it holds
	\begin{equation} \label{condUorder}
	{\rm Cond}_2(V)= \mathcal{O}(n^2).
 	\end{equation}
\end{theorem}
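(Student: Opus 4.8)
The plan is as follows. By \eqref{Vformula} and \eqref{eq3.15}, since $\mathbf I$ is unitary one has ${\rm Cond}_2(V)={\rm Cond}_2(\Phi)=\|\Phi\|_2\,\|\Phi^{-1}\|_2$, so it suffices to bound the two factors so that the product of the bounds is $\mathcal O(n^2)$. I would aim for $\|\Phi\|_2=\mathcal O(n^{3/2})$ and $\|\Phi^{-1}\|_2=\mathcal O(n^{1/2})$; the first bound is essentially sharp (a column of $\Phi$ belonging to a root near an endpoint of $[-1,1]$ already has norm of order $n^{3/2}$), whereas the second leaves room to spare.

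For $\|\Phi\|_2$ I would use $\|\Phi\|_2\le\|\Phi\|_F$ together with $U_k(\cos\th)=\sin((k+1)\th)/\sin\th$. Writing $x_j=\cos\th_j$ with $\th_j=\a_j+{\rm i}\b_j$, one gets $|U_k(x_j)|\le\cosh\big((k+1)|\b_j|\big)/|\sin\th_j|$ and $|\sin\th_j|^2=\sin^2\a_j+\sinh^2\b_j\ge\sin^2\a_j$. The estimate then hinges on two refined facts about the roots, going well beyond Theorem \ref{thm_eigs}: (i) $|\b_j|=\mathcal O\big((\log n)/n\big)$ for every $j$, so that $\cosh((k+1)|\b_j|)$ stays essentially $\mathcal O(1)$ uniformly for $k\le n-1$; and (ii) the real parts $\a_j$ are spread over $(0,\pi)$ like a Chebyshev grid of spacing of order $1/n$, so that $\sum_{j=1}^n|\sin\th_j|^{-2}=\mathcal O(n^2)$ (the $\mathcal O(1)$ roots closest to $\pm1$ contribute $\mathcal O(n^2)$, the rest $\mathcal O(n)$ in total). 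Combining, $\|\Phi\|_F^2\le\sum_j n\cdot\mathcal O(1)\,|\sin\th_j|^{-2}=\mathcal O(n^3)$, hence $\|\Phi\|_2=\mathcal O(n^{3/2})$.

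For $\|\Phi^{-1}\|_2$ I would use the explicit formula for $V^{-1}$ (equivalently $\Phi^{-1}$), whose $j$-th row consists of the Chebyshev-$U$ expansion coefficients $c_{j,0},\dots,c_{j,n-1}$ of the Lagrange fundamental polynomial $L_j(x)=\prod_{\ell\ne j}(x-x_\ell)\big/\prod_{\ell\ne j}(x_j-x_\ell)$. Splitting these complex coefficients into real and imaginary parts (each part being a polynomial that is real-valued on $[-1,1]$) and applying Parseval for the weight $\sqrt{1-x^2}$ yields $\sum_k|c_{j,k}|^2=\frac2\pi\int_{-1}^{1}|L_j(x)|^2\sqrt{1-x^2}\,dx$, so that $\|\Phi^{-1}\|_2^2\le\|\Phi^{-1}\|_F^2=\frac2\pi\int_{-1}^{1}\Big(\sum_{j=1}^n|L_j(x)|^2\Big)\sqrt{1-x^2}\,dx$. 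I would then write $L_j(x)=\dfrac{q(x)}{(x-x_j)\,q'(x_j)}$ with $q(x)=2^{1-n}\big(T_n(x)+{\rm i}\,U_{n-1}(x)\big)$ the monic characteristic polynomial (read off from \eqref{e-value}), bound $|q(x)|^2=2^{2-2n}\big(T_n^2(x)+U_{n-1}^2(x)\big)$ from above on $[-1,1]$, lower-bound the separation $|q'(x_j)|=\prod_{\ell\ne j}|x_j-x_\ell|$, and evaluate $\int_{-1}^1\sqrt{1-x^2}\,|x-x_j|^{-2}\,dx$ using a lower bound on $|\im x_j|$. This should give $\|\Phi^{-1}\|_2=\mathcal O(n^{1/2})$, whence ${\rm Cond}_2(V)=\mathcal O(n^{3/2})\cdot\mathcal O(n^{1/2})=\mathcal O(n^2)$ for $n\ge8$.

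The main obstacle, feeding every step above, is the sharp two-sided localization of the roots $x_j$. Theorem \ref{thm_eigs} only gives $|x_j|<1+1/\sqrt{2n}$, which through $y=x+\sqrt{x^2-1}$ yields merely $|y_j|^{\,n}=e^{\mathcal O(\sqrt n)}$ — far too weak to control $|U_k(x_j)|$. I would instead return to the reduced equation \eqref{y-eq}, $y^{2n}=-(y-{\rm i})^2/(y+{\rm i})^2$, and compare its solutions with roots of unity to show $|y_j|=1+\mathcal O\big((\log n)/n\big)$, to pin $\re\th_j$ near a Chebyshev-type grid, and to bound $|\im\th_j|$ both above (needed for $\|\Phi\|_2$) and below (needed for $\|\Phi^{-1}\|_2$). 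The most delicate points are the behaviour near $x=\pm1$, where the crude modulus bound degenerates, and the single outlier root — purely imaginary when $n$ is odd — that lies farthest from $[-1,1]$, at distance of order $(\log n)/n$, and governs the constants; establishing in addition the lower bound $\prod_{\ell\ne j}|x_j-x_\ell|\ge n^{-\mathcal O(1)}$ on the pairwise root separation is the other technical crux.
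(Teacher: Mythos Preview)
Your overall architecture matches the paper: reduce to ${\rm Cond}_2(\Phi)$, split as $\|\Phi\|_2\cdot\|\Phi^{-1}\|_2$, and aim for the sharp targets $\mathcal O(n^{3/2})$ and $\mathcal O(n^{1/2})$. The paper also identifies the fine localization of the roots $x_j=\cos(\alpha_j+{\rm i}\beta_j)$ as the technical core (its Lemma~A.1 pins down $\alpha_j\in(j\pi/(n+1),j\pi/n)$ and $\beta_j=b_j/n$ with $b_j$ monotone and bounded). Where you depart from the paper is in the norm used and in the mechanism for bounding $\|\Phi\|_2$: the paper applies the Christoffel--Darboux formula to obtain the closed form
\[
(\Phi^{*}\Phi)_{jk}=\frac{s_js_k\,(2{\rm i}+\bar x_j-x_k)}{2\,\bar x_jx_k(\bar x_j-x_k)},\qquad s_k=x_kT_n(x_k),\ s_k^2=1,
\]
and then bounds the row sums $\sum_k|(\Phi^{*}\Phi)_{jk}|=\mathcal O(n^3)$, giving $\|\Phi\|_2\le\sqrt{\|\Phi^{*}\Phi\|_\infty}=\mathcal O(n^{3/2})$. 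For $\|\Phi^{-1}\|_2$ it uses the same integral representation of the rows via $L_j$ that you propose, but evaluates it by Gaussian quadrature at the Chebyshev-$U$ nodes $y_s$ and again bounds row sums of $WW^{*}$ rather than the Frobenius norm. So the two routes are cousins, not identical.

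There is, however, a real gap in your argument for $\|\Phi\|_2$. Your step (i) asserts $|\beta_j|=\mathcal O((\log n)/n)$ and concludes that $\cosh\!\big((k+1)|\beta_j|\big)$ is ``essentially $\mathcal O(1)$'' for $k\le n-1$. The first claim is correct (indeed $b_j\le\bar b$ with $\sinh\bar b\,\sinh(\bar b/n)=1$, so $\bar b\sim\ln n$), but the conclusion does not follow: for the handful of middle roots one has $n\beta_j=b_j\sim\ln n$, hence $\cosh(n\beta_j)\sim n/(2\ln n)$, which is emphatically not $\mathcal O(1)$. Consequently your column estimate $\sum_{k}|U_k(x_j)|^2\le n\cdot\mathcal O(1)/|\sin\theta_j|^2$ fails for those $j$; in fact $\sum_{k=0}^{n-1}\sinh^2((k+1)\beta_j)\asymp n\sinh(2b_j)/b_j$ can be as large as $\mathcal O(n^3/\ln n)$. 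The Frobenius bound $\|\Phi\|_F^2=\mathcal O(n^3)$ can still be rescued by splitting into (a) the $\mathcal O(1)$ middle roots, for which $|\sin\theta_j|\asymp1$ absorbs the blow-up, and (b) the remaining roots, for which $b_j=\mathcal O(1)$ so your argument goes through --- but this case distinction is precisely what your write-up suppresses. The paper's Christoffel--Darboux route is cleaner here because the closed form for $(\Phi^{*}\Phi)_{jk}$ reduces everything to the single scalar estimate $|x_j|^{-2}|\bar x_j-x_j|^{-1}<n^3$ (its Lemma~A.2), with no need to sum a geometric-type series in $k$.

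A smaller point: in your $\|\Phi^{-1}\|$ plan you propose to lower-bound $|q'(x_j)|=\prod_{\ell\ne j}|x_j-x_\ell|$ directly. The paper avoids this product altogether by computing $|p_n'(x_j)|=|{\rm i}-nx_j|/|1-x_j^2|$ in closed form from the identity $x_j^2T_n^2(x_j)=1$; this is both simpler and sharper than any separation bound you are likely to extract from the root localization alone.
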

\begin{proof}
{From \eqref{eq3.15}, the proof lies in proving  ${\rm Cond}_2(\Phi)= \mathcal{O}(n^2)$  by using  the Christoffel-Darboux formula and    some  special  properties of relevant orthogonal polynomials.   The details are  quite  technical and hence given  in  Appendix A for better readability.}
\end{proof}

{An interesting byproduct of Appendix A is    the precise estimate of each individual eigenvalue  of $\mathbb{B}$, which allows us to accurately compute all $n$ different complex eigenvalues   by  Newton's method with $\mathcal{O}(n)$ complexity} (see the following subsection \ref{fasteig} for details).

{\begin{remark}[fast algorithm  for $V^{-1}$]
Making use of the special structure of $\Phi$ (cf.  \eqref{Vformula}), in Appendix B we {give   a}  stable and fast algorithm with complexity  $\mathcal{O}(n^2)$  to  compute  $V^{-1}$. We believe that this algorithm is   of independent interest since it provides a very different idea for inverting  the   Vandermonde-like matrix, which is a well-known ill-conditioned problem and  {a lot of research has been devoted to it},  such as  \cite{higham1988fast,reichel1991chebyshev,calvetti1993fast,gohberg1994fast,gohberg1997fasta,gohberg1997fastb} to name a few.
We present some numerical results in  Section \ref{fasteig} to demonstrate the efficiency of  the proposed    algorithm.
{We remark that some fast inversion algorithms in the literature may not be stable for our Vandermonde-like matrix $V$, mainly due to \mg{its definition} over complex nodes $\{x_i\}_{i=1}^n$. For example, we have tested the fast algorithm given in \cite{gohberg1994fast},
which is very unstable and becomes inaccurate even with $n\ge 32$. Based on our numerical experiments, our proposed algorithm seems to be very stable and it shows $O(n^2)$ complexity, but  a comprehensive comparison with   other fast inversion algorithms deserves further investigation that is beyond our focus.
}
\end{remark}}

\section{Numerical results}\label{sec4}
In this section, we present  some numerical examples to illustrate {the advantage of {the proposed  PinT algorithm}, with respect to numerical accuracy, stable spectral decomposition and   parallel efficiency.    For the first two {subsections}, the results are obtained by   using MATLAB on {a Dell Precision 5820 Tower Workstation with Intel(R) Core(TM) i9-10900X CPU@3.70GHz CPU and 64GB RAM}.
For parallel computation {in subsection \ref{sec4.3}}, we  use a
 parallel computer (SIUE Campus Cluster)  with  10 CPU nodes}   connected via 25-Gigabit per second (Gbps) Ethernet network,
where each node is equipped with
two AMD EPYC 7F52 16-Core Processors at 3.5GHz base clock
and 256GB RAM.
 For the complex-shift  linear systems in step-(b) of the direct PinT algorithm  \eqref{eq1.3}, we use the LU factorization-based   solver provided as \texttt{PCLU} preconditioner in {PETSc \cite{petsc-web-page,petsc-user-ref}}.
In parallel examples, let $J(n,s)$  be the measured CPU time  (wall-clock) by using $s$ cores for $n$ time points.  {Following the standard principles   \cite{chopp2019introduction,bueler2020petsc}, we measure}  the parallel speedup  as
\[
\mbox{Speedup (Sp.)}=\frac{J(n,1)}{J(n,s)}.
\]
{The strong and weak scaling efficiency with $s$ cores are computed respectively as}
\[
\mbox{Strong Efficiency (SE)}= \frac{J(n,1)}{s\times J(n,s)}
,~
\mbox{Weak Efficiency (WE)}=\frac{J(2,1)}{J(2 \times s,s)}.
\]
We highlight that the measured parallel speedup and efficiency are affected by many factors, such as {the} computer cluster network setting and how to implement the parallel codes. Hence our parallel  results may largely underestimate the best possible speedup results  with optimized  codes,
{but they do clearly illustrate the practical parallel efficiency of our proposed algorithm.}
\subsection{Accuracy comparison of two direct PinT algorithms}\label{sec4.1}
As   mentioned in Section \ref{sec1},  the  direct PinT algorithm based on the diagonalization technique  was {carefully analyzed} in \cite{GH19}, where the authors used the geometrically increasing step-sizes   to the make the time discretization matrix $B$ diagonalizable. Compared to that  algorithm, the most important advantage of our PinT algorithm lies in the much weaker dependence of the roundoff error (due to diagonalization) on $n$.  The first set of numerical results are devoted to comparing such a dependence for these two algorithms. To this end, we consider the following 1D wave equation
\begin{equation}\label{eq4.1}
u_{tt}-u_{xx}=0, ~u(x,0)=\sin(2\pi x), ~u'(x,0)=0, ~(x,t)\in(-1,1)\times (0,T),
\end{equation}
with  periodic boundary condition $u(-1,t)=u(1,t)$.  Applying the centered finite difference method in space with a uniform mesh $\{x_j=j\Delta x\}_{j=1}^m$ gives a  second-order linear ODE system
\begin{equation}\label{eq4.2}
{\bm u}_h''+A{\bm u}_h=0, ~{\bm u}_h(0)={\bm u}_{0,h}, ~{\bm u}'_h(0)=0, ~t\in (0,T),
\end{equation}
where
$$
A=\frac{1}{\Delta x^2}
\begin{bmatrix}
2 &-1 & & &-1\\
-1 &2 &-1 & &\\
&\ddots &\ddots &\ddots &\\
& &-1 &2 &-1\\
  -1& & &-1 &2
\end{bmatrix},~{\bm u}_{0,h}
=\begin{bmatrix}
\sin(2\pi x_1)\\
\sin(2\pi x_2)\\
\vdots\\
\sin(2\pi x_{m})\\
\end{bmatrix},~\Delta x=\frac{2}{m+1}.
$$

For \eqref{eq4.2}, the diagonalization-based PinT algorithm in \cite{GH19} is based on the Trapezoidal rule  (TR) as   time-integrator, where the step-sizes are fixed by  $\Delta t_j=\Delta t_{n}\tau^{j-n}$  for $j=1,2,\dots, n$   with $\tau>1$ being a constant and $\Delta t_{n}$ being given a {priori}\footnote[1]{The step sizes in \cite{GH19} are     $\Delta t_j=\Delta t_1\tau^{n-j}$ with $\Delta t_1$ being given a {priori}. Here, to control the global discretization error we first fix the last step size $\Delta t_n$ and then specify the step sizes as $\Delta t_j=\Delta t_n\tau^{j-n}$.}.
For reader's convenience,  we briefly explain some details of the algorithm in \cite{GH19}.   By letting  ${\bm w}=[{\bm u}_h^\T, {\bm v}_h^\T]^\T\in\mathbb{R}^{2m}${, then} we can rewrite \eqref{eq4.2} as
 \begin{equation}\label{neq4.3}
 {\bm w}'+\underbrace{\begin{bmatrix} & -I_x\\ A &\end{bmatrix}}_{:=Q}{\bm w}=0, ~{\bm w}(0)={\bm w}_0:=\begin{bmatrix}{\bm u}_{0,h}\\0\end{bmatrix}.
\end{equation}  Let
 \begin{equation}\label{neq4.4}
B_1=\begin{bmatrix}\frac{1}{\Delta t_1} &  & &\\
-\frac{1}{\Delta t_2} &\frac{1}{\Delta t_2} & &\\
 &\ddots &\ddots &\\
& &-\frac{1}{\Delta t_{n}} &\frac{1}{\Delta t_{n}}
\end{bmatrix},~B_2=\frac{1}{2}\begin{bmatrix}1   & & &\\
1 &1 & &\\
 &\ddots &\ddots &\\
& &1 &1
\end{bmatrix}, ~\tilde{\bm b}=\begin{bmatrix}\frac{w_0}{\Delta t_1}-\frac{Qw_0}{2}\\0\\\vdots\\0\end{bmatrix},
 ~{\bm w}=\begin{bmatrix}w_1\\ w_2\\\vdots\\ w_{n}\end{bmatrix}.
\end{equation}
Then, the all-at-once   system  of TR applied to \eqref{eq4.2} is
 \begin{equation}\label{neq4.5}
(B\otimes I_x){\bm w}+(I_t\otimes Q){\bm w}={\bm b}, ~B:=B_2^{-1}B_1, ~{\bm b}:=(B_2^{-1}\otimes I_x)\tilde{\bm b}.
\end{equation}
From \cite{GH19} it holds      $B=VDV^{-1}$ with  $D={\rm diag}(\frac{2}{\Delta t_j})$,  $V=\tilde{V}\tilde{D}$ and
$$
 \tilde{V}=\begin{bmatrix}
 1 & & & &\\
 p_1 &1 & & &\\
  p_2 &p_1 &1 & &\\
 \vdots &\ddots &\ddots &\ddots &\\
 p_{n-1}    &\dots &p_2 &p_1 &1
 \end{bmatrix},~
  \tilde{V}^{-1}=\begin{bmatrix}
 1 & & & &\\
 q_1 &1 & & &\\
  q_2 &q_1 &1 & &\\
 \vdots &\ddots &\ddots &\ddots &\\
q_{n-1}    &\dots &q_2 &q_1 &1
 \end{bmatrix}, ~\tilde{D}={\rm diag}\left(\frac{1}{\sqrt{1+\sum_{l=1}^{n-j}|p_l|^2}}\right),
 $$
 where $p_j= {\prod_{l=1}^j\frac{1+\tau^l}{1-\tau^l}}$ and $q_j=q^{-j}{\prod_{l=1}^j\frac{1+\tau^{-l+2}}{1-\tau^{-l}}}$.
 The diagonal matrix $\tilde{D}$ is used to reduce the condition number of the eigenvector matrix $V$ (if we simply use  $V=\tilde{V}$ the condition number is   larger).  With the above given spectral factorization  of $B$, we can solve the all-at-once system \eqref{neq4.5} via the same diagonalization procedure \eqref{eq1.3}.

Let  $\Delta x=\frac{1}{64}$, $\Delta t_{n}=10^{-2}$ and  $\tau=1.15$.   We let $n$ vary from 4 to 50 and for each $n$  we  implement the diagonalization-based algorithm in \cite{GH19} by using the variable step-sizes. Then,
we calculate the length of the time interval\footnote[2]{Since $\Delta t_j=\Delta t_{n}\tau^{j-n}$ the length of time interval grows as $n$  increases.   }, i.e., $T(\tau, n)=\sum_{j=1}^{n}\Delta t_j$ and implement the   algorithm proposed in this paper by using a uniform step-size    $\Delta t={T(\tau, n)}/{n}$.  Define the global error of numerical solution as
 \begin{equation}\label{eq4.6}
\mbox{global error}=\max_{j=1,2,\dots, n}\|{\bm u}_{j,h}-{\bm u}^{\rm ref}_{j,h}\|_\infty,
\end{equation}
where $\{{\bm u}^{\rm ref}_{j,h}\}$ denotes the reference solution obtained by using the   \texttt{expm} function in MATLAB. That is ${\bm w}_j= \texttt{expm}(-t_jQ){\bm w}_0$ and ${\bm u}^{\rm ref}_{j,h}={\bm w}_j(1:m)$. The sequence $\{{\bm u}_{j,h}\}$  is obtained via three ways:    by the algorithm studied in this paper,  by the algorithm   in \cite{GH19} and   by the {sequential} time-stepping TR using   the variable step-sizes.

In Figure \ref{fig4.1} on the  left, we compare the global error for these three numerical solutions and it is clear that  for the algorithm in \cite{GH19} the quantity $n$ can not be large and the error grows rapidly when $n>25$.
As denoted by the black solid line, the error  of the time-stepping   TR does not change dramatically as $n$ increases and this is because  for each $n$ the last step-size $\Delta t_{n}$ (i.e., the largest step-size) is fixed. For the time-stepping TR, the global error is just the time discretization error.   By comparing  the   dash-dot blue line (with marker `{$\circ$}')  with the black solid line, we can see how the roundoff error affects the global error: when $n$ is small the roundoff error is smaller than the time discretization error and therefore the influence of the roundoff error is invisible, but when $n$ is large (say $n>25$) the roundoff error plays a dominate role and  blows up as $n$ increases.  From \cite{GH19}, we know that such a rapid increase of the roundoff error  is due to the very large condition number of $V$ of the time discretization matrix $B$ in \eqref{neq4.5}.  Indeed, as we can see in Figure \ref{fig4.1} on the   right,  such a condition number  becomes very large
as $n$ grows.  On the contrary,
 the condition number for the new algorithm   only moderately increases  as $n$ grows and it is much smaller.    {Such a {well-conditioned $V$} can be used to explain  the result in Figure \ref{fig4.1} on the left for the new algorithm}:    the global error   never blows up and in fact it continuously decreases when $n\geq6$. {(The small  condition number implies that  the roundoff error is much smaller and thus the global error is dominated by the time discretization error.)}  The decreasing of the global error can be explained as follows. The step-size
$$
\Delta t= \frac{1}{n}{\sum}_{j=1}^{n}\Delta t_{n}\tau^{j-n}=\Delta t_n\frac{1-\tau^{-n}}{n(1-\tau^{-1})}\approx\frac{0.0766}{n} ~(\text{if}~n\geq40)
$$
 decreases as $n$ grows and thus the time discretization error decreases accordingly.
{This error plot in the left of Figure \ref{fig4.1} is not suitable for verifying the second-order of accuracy of our scheme, since the time step-size $\Delta t$ is not small. Such a second-order accuracy will be verified in Table 4.2-4.4.}
  \begin{figure}[h]
\centering
\includegraphics[width=0.49\textwidth]{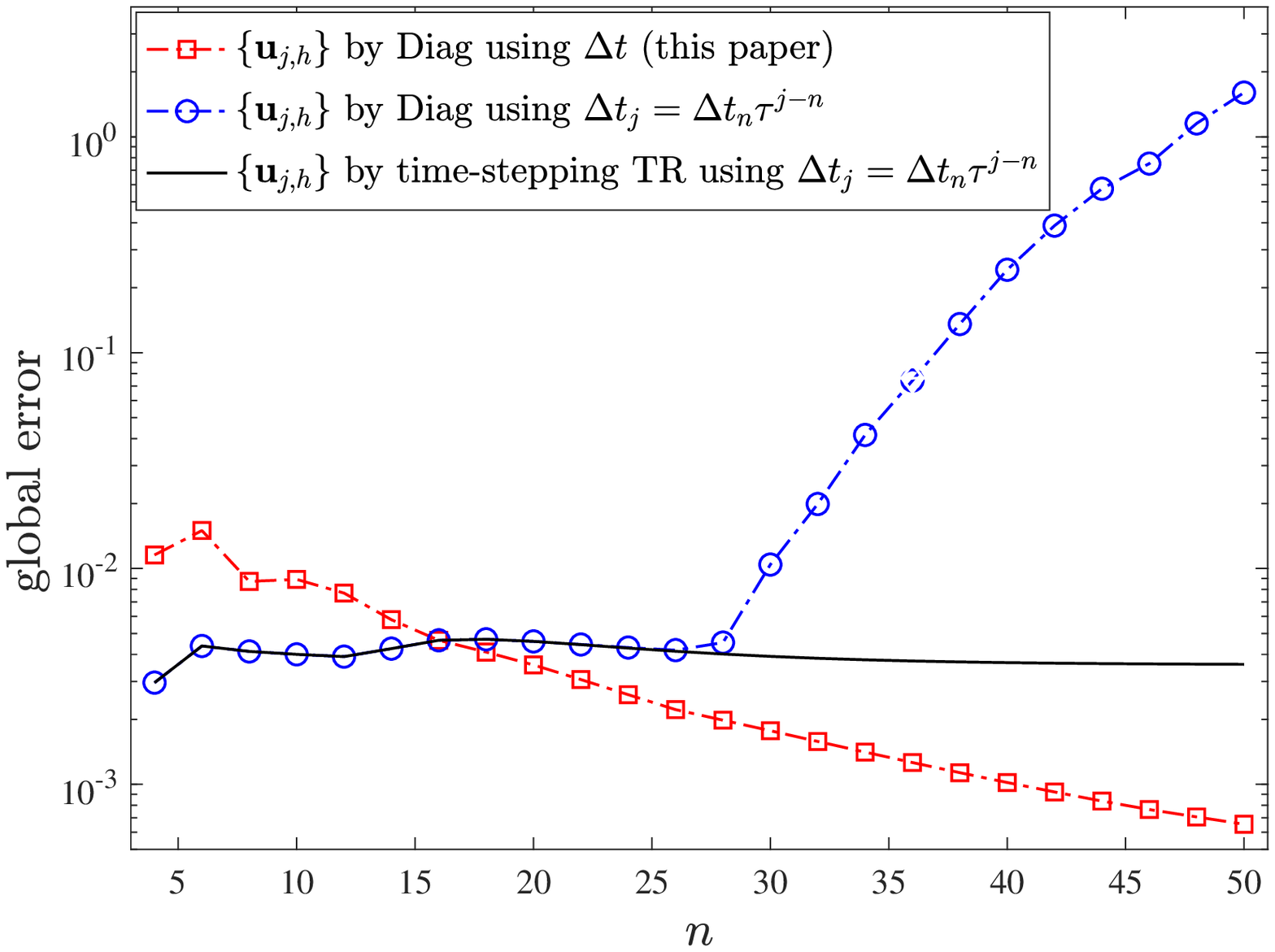} \includegraphics[width=0.49\textwidth]{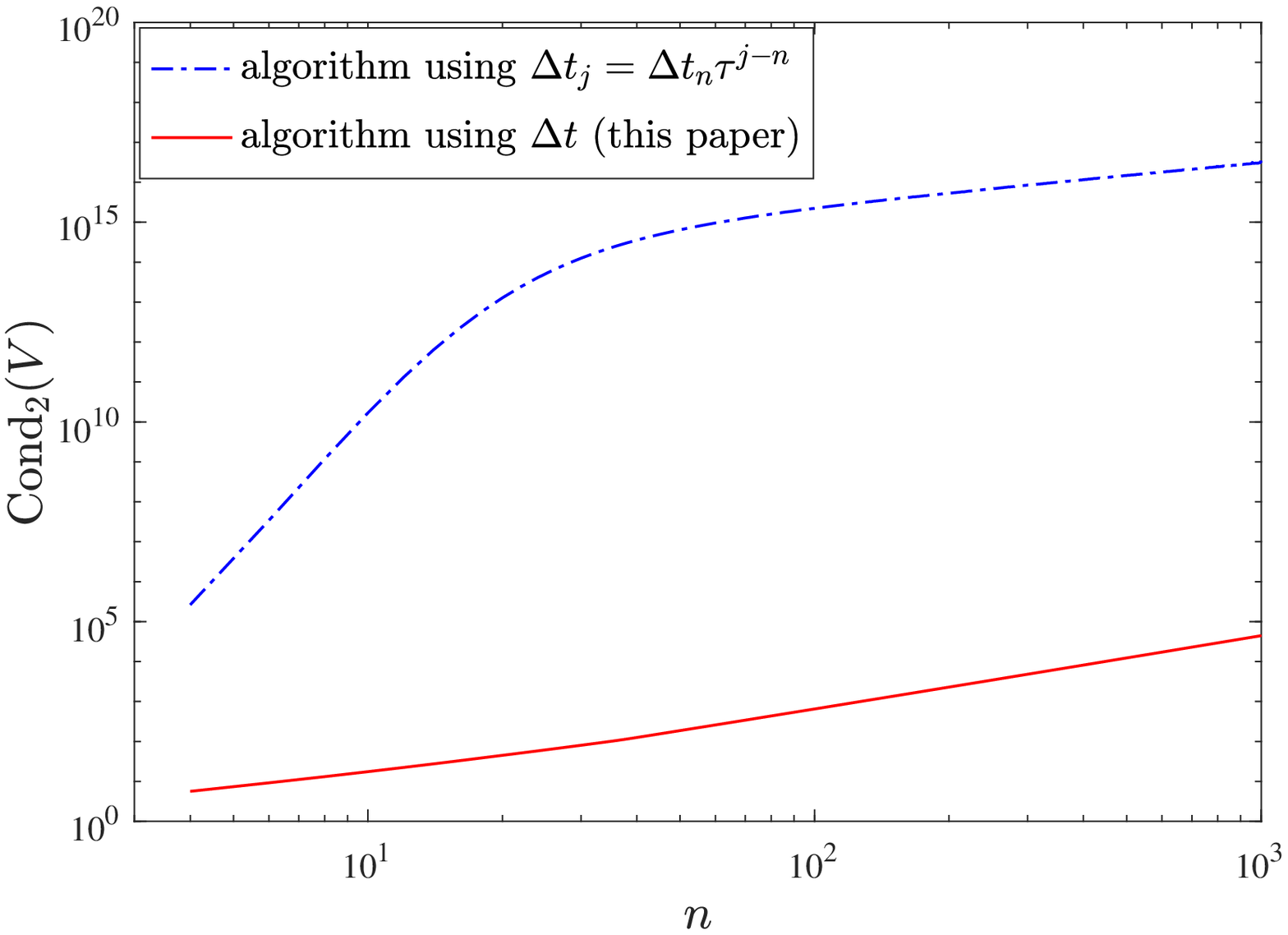}
\caption{Left: the global error for the  new algorithm studied in this paper,  the algorithm  in \cite{GH19} and   the time-stepping TR using the variable step-sizes.  Right: comparison of the condition numbers of the eigenvector matrix $V$ for the two diagonalization-based algorithms.
} \label{fig4.1}
\end{figure}

%
%

\subsection{Fast spectral decomposition of $B$.} \label{fasteig}
The spectral decomposition of the time discretization matrix $ B=VD V^{-1}$ is important  in our PinT algorithm.   The eigenvalue  {$\lambda_j$  can be computed  by  Newton's method  as described below.
 Based on  Theorem  \ref{thm_eigsys} (cf.  equation \eqref{th-eq}), it holds $\lambda_j={\rm i} \cos(\theta_j)$, where $\theta_j$ is the $j$-th root of}
 $$\rho(\theta):=\sin(n\theta)-{\rm i}\cos(n\theta)\sin\theta=0.$$
Applying   Newton's  iteration to the nonlinear equation $\rho(\theta)=0$ of single variable $\theta$ leads to
 \begin{equation}\label{eigNewton}
\theta_j^{(l+1)}=\theta_j^{(l)}-\frac{\rho(\theta_j^{(l)})}{\rho'(\theta_j^{(l)})},~ l=0,1,2,\cdots.
\end{equation}
{Such a Newton method runs $n$ loops for the  $n$ eigenvalues $\lambda_j$'s. The maximal iteration  number over all the $n$ eigenvalues   is  almost  constant   and therefore the complexity of Newton's iteration \eqref{eigNewton}   for computing  all the   eigenvalues is of $\CO(n)$, which is significantly faster than the standard QR algorithm with $\CO(n^3)$ complexity as used by MATLAB¡¯s highly optimized built-in  function \texttt{eig}. However, it is rather difficult to choose the $n$ initial guesses $\{\theta_j^{(0)}\}_{j=1}^n$. If these initial guesses are not properly chosen, the $n$  iterates $\{\theta_j^{(l)}\}_{j=1}^n$ converge to $\tilde{n}$ different values with $\tilde{n}<n$, i.e.,  not all the eigenvalues are {found}\footnote[1]{From Theorem \ref{thm_eigs} all the $n$ eigenvalues of $B$ are different}. By Lemma \ref{lem-zeros} in Appendix A, we suggest  using}
\[
\theta_j^{(0)}=\frac{1}{2}\left(\frac{j\pi}{n}+\frac{j\pi}{n+1}\right)+\frac{{\rm i}}{n},~ j=1,2,\cdots, n,
\]
 {by which the iterates of \eqref{eigNewton}  converge to the $n$ different eigenvalues correctly}.

For $V^{-1}$, we also proposed a fast algorithm with complexity $\mathcal{O}(n^2)$ in Appendix B, which is of independent interest in the area of numerical methods  for Vandermonde-like matrices. {The advantage of explicitly constructing the inverse matrix  $V^{-1}$ is to increase the parallel efficiency of step-(a) by reducing communication cost.}
Let $B=V_{\mathtt{eig}}D_{\mathtt{eig}}V^{-1}_{\mathtt{eig}} $
and $B=V_{\mathtt{fast}}D_{\mathtt{fast}}V_{\mathtt{fast}}^{-1} $
be the   spectral decomposition  of $B$ by the \texttt{eig} function and our fast algorithm (implemented with MATLAB), respectively.
Define the {maximal}  relative differences
\[
\eta_\mathtt{fast}:=\frac{\|D_{\mathtt{eig}}-D_{\mathtt{fast}}\|_F}{\|D_{\mathtt{eig}}\|_F},
\]
and
\[
\omega_{\mathtt{eig}}:=\frac{\|B-V_{\mathtt{eig}}D_{\mathtt{eig}}V_{\mathtt{eig}}^{-1}\|_F}{\|B\|_F},~
\omega_\mathtt{fast}:=\frac{\|B -V_{\mathtt{fast}}D_{\mathtt{fast}}V_{\mathtt{fast}}^{-1}\|_F}{\|B\|_F}.
\]
(For $\eta_\mathtt{fast}$ the   eigenvalues  are sorted in the same order.)
In Table \ref{Ex0Tab1eig}, we
show  CPU time  (in seconds)  for the spectral decomposition using the \texttt{eig} function in MATLAB and our fast algorithm.  The combined CPU time is  estimated by the timing functions
\texttt{tic/toc} in MATLAB.   Besides, we also show  the   computational time  for the \texttt{eig} function (for computing $V_{\mathtt{eig}}$ and $\Sigma_{\mathtt{eig}}$) and the {\texttt{mrdivide} (i.e. `/') function (for computing $D_{\mathtt{eig}}V_{\mathtt{eig}}^{-1}$ with the syntax $D_{\mathtt{eig}}/V_{\mathtt{eig}}$)} in MATLAB and our proposed fast spectral decomposition algorithm,
where the column `Iter' denotes the  number of Newton iterations required to reach the tolerance $\tol=10^{-10}$.  The CPU time  of our fast algorithm shows $O(n^2)$ growth, which is significantly less  than that of the   \texttt{eig} function (with $O(n^3)$ growth). In particular,   for $n=8192$  we observed more than 25 times
speedup. The eigenvalues and eigenvectors computed by these two methods   are essentially the same, if we take   into account the effects of roundoff and discretization errors.
{In particular, our proposed fast algorithm for the computation of $V^{-1}$ involves solving the real pentadiagonal linear system (\ref{Sb=v}) and $n$ complex
	tridiagonal sparse linear systems (\ref{Ta=f}), which seems to deliver noticeable degraded approximation accuracy (i.e., larger $\omega_\mathtt{fast}$) mainly due to more round-off errors.
Nevertheless, the achieved accuracy is sufficiently high in view of the second-order accurate discretization errors in space and time.
}
\begin{table}[H]   
	\centering
	\caption{Comparison of \texttt{eig+{mrdivide}} solver and our fast spectral decomposition algorithm}
	
	\begin{tabular}{|c||c|c||c|c|c||c||c|c|c|c|c}
		\hline
		&\multicolumn{2}{c|}{MATLAB's \texttt{eig+{mrdivide}}}&\multicolumn{4}{c|}{Our fast algorithm}\\
		\hline
		$n$&  CPU&$\omega_\mathtt{eig}$&Iter &CPU & $\omega_\mathtt{fast}$& $\eta_\mathtt{fast}$\\
		\hline
64&	 0.002 &	  1.59e-14&	 7 &	 0.004 &	 3.61e-13&	 2.67e-15 \\
128&	 0.011 &	  7.77e-14&	 7 &	 0.006 &	 1.06e-12&	 2.77e-15 \\
256&	 0.056 &	  1.89e-13&	 8 &	 0.019 &	 1.10e-11&	 4.55e-15 \\
512&	 0.277 &	  9.69e-13&	 8 &	 0.073 &	 5.30e-11&	 8.89e-15 \\
1024&	 1.107 &	  3.85e-12&	 9 &	 0.301 &	 2.04e-10&	 2.63e-14 \\
2048&	 6.741 &	  1.01e-11&	 9 &	 1.206 &	 5.12e-10&	 1.25e-13 \\
4096&	 60.257 &	  4.02e-11&	 10 &	 5.054 &	 6.75e-09&	 5.16e-13 \\
8192&	 606.045 &	  2.25e-10&	 10 &	 23.402 &	 2.85e-08&	 4.07e-13 \\
		\hline
	\end{tabular}
	\label{Ex0Tab1eig}
\end{table}
 
\subsection{Parallel Experiments}\label{sec4.3}
In this subsection, we provide a series of parallel simulation results to validate the speedup and parallel efficiency of our proposed direct PinT algorithm.

{\bf Example-1}.
In this example we consider a  {2D} heat equation with homogeneous Dirichlet boundary condition  defined on a  square domain $\Omega=(0,\pi)^2$:
\begin{equation}\label{heatPDE}
	\begin{cases}
		u_{t}(x,y,t)- \Delta u(x,y,t) =r(x,y,t), &\tn{in} \Omega\times(0,T),\\
		u(x,y,t)=0,&\tn{on} \partial\Omega\times(0, T), \\
		u(x,y,0)=u_0(x,y),&\tn{in} \Omega,
	\end{cases}
\end{equation}
where
$u_0(x,y)=\sin(x)\sin(y)$ and  $r(x,y,t)=\sin(x)\sin(y)e^{-t}$. {The  exact solution of this problem is}    $u(x,y,t)=\sin(x)\sin(y)e^{-t}$.
{Approximating  $\Delta$ by a  centered finite   difference scheme with a uniform mesh step size {$h$} in both $x$ and $y$ directions  gives the following}  ODE system:
\[
\bm u_h'(t)-\Delta_h \bm u_h(t)=\bm r_h(t),~
\bm u_h(0)=\bm u_{0,h},
\]
where $\Delta_h\in \IR^{{m\times m}}$ is the 5-point stencil Laplacian matrix, $\bm u_h$, $\bm r_h$, $\bm u_{0,h}$ denotes the finite difference approximation to the corresponding   $u$, $r$, $u_0$ over the {$m$} interior spatial grid points.
{In} Table \ref{Ex1Tab1}, we show  the approximation errors (measured by the  $\infty$-norm) and the strong and weak scaling results of our direct PinT solver, where the spatial mesh size is  {$h=\frac{1}{513}$ (i.e., ${m=512^2}$)} and the number of cores ranges from 1 to 256.
The {approximation} errors  in weak scaling results show a second-order accuracy in time before dominated by the discretization errors in space.
Both strong and weak scaling efficiency are very promising up to 32 cores. But when the core number $s\geq64$, we see an obvious drop of the parallel efficiency. This  is mainly due to the slow {interconnection} between the nodes (each node contains 32 cores).
Using a fast, low-latency {interconnection} (e.g., the InfiniBand networking based on remote direct memory access technology) would greatly {further improve the   parallel efficiency}.
{For $n=512$ , from the strong scaling CPU column we observe that  the system can be solved  within 20 seconds via our direct PinT algorithm using  256 cores}, rather than over 20 mins using a single core.
\begin{table}[ht]
	\centering
	\caption{Scaling and error results of example 1: a heat PDE ($T=2$ with ${m=512^2}$)}
	
	\begin{tabular}{|c||c|c||c|c|c||c|c|c|c||c|c|c|c|c|cc}
		\hline
		Core\#&\multicolumn{5}{c|}{strong scaling} & \multicolumn{4}{c|}{weak scaling} \\ \hline
		$s$&	$n$&  Error& CPU &Sp.&SE & $n$&  Error& CPU&WE\\
		\hline
 1      & 512   & 2.23e-06         & 1318.8  &  1.0   & 100.0\%     &   2   & 7.93e-02      & 5.4 & 100.0\%             \\
 2      & 512   & 2.23e-06         & 667.8   &  2.0   & 98.7\%     &   4   & 1.19e-02      & 5.4  & 100.0\%             \\
 4      & 512   & 2.23e-06         & 346.4   &  3.8   & 95.2\%     &   8   & 3.22e-03      & 5.4  & 100.0\%             \\
 8      & 512   & 2.23e-06         & 173.0   &  7.6   & 95.3\%     &  16   & 8.26e-04      & 5.5  & 98.2\%           \\
 16      & 512   & 2.23e-06         & 90.7   &  14.5  & 90.9\%     &  32   & 2.09e-04      & 5.8  & 93.1\%            \\
 32      & 512   & 2.23e-06         & 51.1   &  25.8  & 80.7\%     &  64   & 5.28e-05      & 6.6  & 81.8\%            \\
 64      & 512   & 2.23e-06         & 32.0   &  41.2  & 64.4\%     & 128   & 1.37e-05      & 8.3  & 65.1\%            \\
 128     & 512   & 2.23e-06         & 23.0   &  57.3  & 44.8\%     & 256   & 4.25e-06      & 12.0 & 45.0\%            \\
 256     & 512   & 2.23e-06         & 19.4   &  68.0  & 26.6\%     & 512   & 2.23e-06      & 19.6 & 27.6\%            \\
		\hline
		
	\end{tabular}
	\label{Ex1Tab1}
\end{table}

 {\bf  Example-2}.
{We next} consider a  linear wave equation with homogeneous Dirichlet boundary condition  defined on a 2D square domain $\Omega=(0,1)^2$:
\begin{equation}\label{wavePDE}
	\begin{cases}
		u_{tt}(x,y,t)- \Delta u(x,y,t) =r(x,y,t), &\tn{in} \Omega\times(0,T),\\
		u(x,y,t)=0,&\tn{on} \partial\Omega\times(0, T), \\
		u(x,y,0)=u_0(x,y),&\tn{in} \Omega,\\
		u_t(x,y,0)=\bar u_0(x,y),&\tn{in} \Omega,
	\end{cases}
\end{equation}
{with the following data}
\begin{equation*}
\begin{split}
&u_0(x,y)=0, ~ \bar u_0(x,y)=2\pi  x(x-1)y(y-1),\\
&r(x,y,t)=-4\pi^2x(x-1)y(y-1)\sin(2\pi t)-2\sin(2\pi t)(x(x-1)+y(y-1)).
\end{split}
\end{equation*}
The  exact solution of this problem is  $u(x,y,t)=x(x-1)y(y-1)\sin(2\pi t).$
Using the same notations  {in {\bf Example-1}}, we obtain a second-order   ODE system:
\[
\bm u_h''(t)-\Delta_h \bm u_h(t)=\bm r_h(t),~
\bm u_h(0)=\bm u_{0,h}, ~ \bm u_h'(0)=\bm{\bar u}_{0,h},
\]
where $\bm{\bar u}_{0,h}$ denotes the finite difference approximation {to   $\bar u_0$ over the  spatial grid points. Then,    we show in
Table  \ref{Ex2Tab1}  the approximation errors,  the strong and weak scaling results.   The parallel efficiency  is very similar to   that in Table \ref{Ex1Tab1}}.
Since our PinT algorithm is based on the same spectral decomposition  $B=VDV^{-1}$, the computational cost of solving {the above} second-order  problem  is essentially the same as the {first-order  problem in {\bf Example-1}}. This is a desirable advantage  over those iterative algorithms {(e.g, parareal and MGRiT)},  whose convergence rates are usually much slower for handling  hyperbolic problems.
\begin{table}[ht]
	\centering
	\caption{Scaling and error results of example 2: a wave PDE ($T=2$ with ${m=512^2}$)}
	
	\begin{tabular}{|c||c|c||c|c|c||c|c|c|c||c|c|c|c|c|cc}
		\hline
		Core\#&\multicolumn{5}{c|}{Strong scaling} & \multicolumn{4}{c|}{Weak scaling} \\ \hline
		$s$&	$n$&  Error& CPU &Sp.&SE & $n$&  Error& CPU&WE\\
		\hline
		1       & 512   & 7.88e-05     &1328.6   &1.0     &   100.0\%               &   2   & 9.19e-03        & 5.4 &  100.0\%   \\
		2        & 512   & 7.88e-05    &676.3    &2.0     &   98.2\%               &   4   & 2.21e-02        & 5.4  &  100.0\%  \\
		4        & 512   & 7.88e-05    &332.6    &4.0     &   99.9\%               &   8   & 3.16e-01        & 5.5  &  100.0\% \\
		8        & 512   & 7.88e-05    &172.6    &7.7     &   96.2\%               &  16   & 1.33e-01        & 5.7  &  100.0\% \\
		16      & 512   & 7.88e-05     &91.2     &14.6    &   91.0\%                &  32   & 2.30e-02        & 6.0  & 94.8\%  \\
		32      & 512   & 7.88e-05     &51.7     &25.7    &   80.3\%                &  64   & 5.21e-03        & 7.1  & 82.1\%  \\
		64      & 512   & 7.88e-05     &31.2     &42.6    &   66.5\%                & 128   & 1.27e-03        & 9.5  & 67.9\%  \\
		128     & 512   & 7.88e-05     &23.2     &57.3    &   44.7\%                & 256   & 3.16e-04        & 14.8 & 46.6\%  \\
		256     & 512   & 7.88e-05     &20.3     &65.4    &   25.6\%                & 512   & 7.88e-05        & 27.4 & 28.2\%  \\
		\hline

	\end{tabular}
	\label{Ex2Tab1}
\end{table}

{\bf Example-3}.
{At last},  we consider a semi-linear parabolic equation with homogeneous Dirichlet boundary condition  defined on a 2D square domain $\Omega=(-1,1)^2$:
\begin{equation}\label{heatPDE_nonlin}
	\begin{cases}
		u_{t}(x,y,t)- \Delta u(x,y,t)+f(u) =r(x,y,t), &\tn{in} \Omega\times(0,T),\\
		u(x,y,t)=0,&\tn{on} \partial\Omega\times(0, T), \\
		u(x,y,0)=u_0(x,y),&\tn{in} \Omega,
	\end{cases}
\end{equation}
where
\begin{equation*}
	\begin{split}
&f(u)=u^3-u, ~ u_0(x,y)=(x^2-1)(y^2-1),\\
&r(x,y,t)=-2(x^2-1)(y^2-1)e^{-t}+(x^2-1)^3(y^2-1)^3e^{-3t}-2e^{-t}((x^2-1)+(y^2-1)).
\end{split}
\end{equation*}
This problem has the exact solution    $u(x,y,t)=(x^2-1)(y^2-1)e^{-t}$.
{By  the centered finite difference method in space, we get a} nonlinear ODE system
\begin{equation}\label{eq4.11}
\bm u_h'(t)-\Delta_h \bm u_h(t)+f(\bm u_h(t))=\bm r_h(t),~
\bm u_h(0)=\bm u_{0,h}.
\end{equation}
This particular type of nonlinear function $f(u)=u^3-u$ was widely used in literature, e.g.  the Schl\"{o}gl model in \cite{buchholz2013optimal,Stefan2021}.
{We solve \eqref{eq4.11} by the nonlinear PinT algorithm described in Section \ref{sec2.1}, for which the simplified Newton iteration starts from a zero   initial guess} {and stops whenever the relative residual {norm}  is smaller than the  tolerance $10^{-8}$ (smaller than the level of discretization errors).}

{In Table  \ref{Ex3Tab1}, we   show} the approximation errors and the strong and weak scaling results of the PinT algorithm, where the required number of SNI (listed in the column `SNI')  shows an anticipated mesh-independent convergence rate. {Compared to}  the linear examples, we see that  the parallel efficiency  {becomes  lower, especially  when the core number $s>32$}. {This is  mainly because of the} communication cost in distributing the averaged block-diagonal Jacobian matrices and dispatching the residual vectors during the   Newton iterations.
Our  codes may be further optimized {to achieve}   better parallel efficiency, which is however beyond the scope of {the} current paper.

\begin{table}[H]
	\centering
	\caption{Scaling and error results of example 3: a semi-linear parabolic PDE ($T=2$ with ${m=256^2}$)}
	
	\begin{tabular}{|c||c|c|c|c|c|c||c|c|c||c|c||c|c|c|}
		\hline
	Core\#&\multicolumn{6}{c|}{Strong scaling} & \multicolumn{5}{c|}{Weak scaling} \\ \hline
	$s$&	$n$&  Error&SNI& CPU&Sp.&SE & $n$&  Error&SNI&  CPU&WE\\
	\hline
 1	 & 512 	& 6.36e-07	 	&9	& 1514.0  &1.0    &  100.0\%      &   2 	& 4.40e-01	 	 &11&  6.6& 100.0\%  \\
2	 & 512 	& 6.36e-07	 	&9	& 770.4  & 2.0   &   98.3\%    &   4 	& 7.64e-03	 	 &9	&  5.4    & 122.2\%  \\
4	 & 512 	& 6.36e-07	 	&9	& 400.6  & 3.8   &   94.5\%    &   8 	& 2.33e-03	 	 &11&  6.8    & 97.1\% \\
8	 & 512 	& 6.36e-07	 	&9	& 217.2  & 7.0   &   87.1\%    &  16 	& 6.38e-04	 	 &9	&  5.9    & 111.9\%  \\
16	 & 512 	& 6.36e-07	 	&9	& 126.9  & 11.9   &  74.6\%      &  32 	& 1.63e-04	 	 &9	&  7.0    & 94.3\% \\
32	 & 512 	& 6.36e-07	 	&9	& 84.7   & 17.9   &  55.9\%      &  64 	& 4.07e-05	 	 &9	&  9.4    & 70.2\% \\
64	 & 512 	& 6.36e-07	 	&9	& 67.6   & 22.4   &  35.0\%      & 128 	& 1.02e-05	 	 &9	&  28.8   & 22.9\% \\
128	 & 512 	& 6.36e-07	 	&9	& 60.6   & 25.0   &  19.5\%      & 256 	& 2.55e-06	 	 &9	&  29.8   & 22.1\% \\
256	 & 512 	& 6.36e-07	 	&9	& 60.8   & 24.9   &  9.7\%     & 512 	& 6.36e-07	 	 &9	&  61.6   & 10.7\% \\
	\hline
		
	\end{tabular}
	\label{Ex3Tab1}
\end{table}
 
\section{Conclusions}\label{sec5}
In this paper we developed and analyzed a diagonalization-based direct (or non-iterative) PinT  algorithm  for  first-  and second-order evolutionary problems. The algorithm is based on a second-order boundary value method as the time integrator and the diagonalization of the time discretization matrix.  Explicit formulas for the \dg \mg{are} given and \mg{used to} prove that  {the} condition number of the eigenvector matrix is of order $\CO(n^2)$. The quadratic growth of the condition number with respect to $n$   guarantees that the proposed algorithm {is well-conditioned  and therefore}  can be used to handle a larger number of  time points, which is more practical than the algorithm by Gander {\em et al.} \cite{GH19}. For implementation, we need to compute the inverse of the eigenvector matrix, for which we give a fast algorithm with complexity $\CO(n^2)$ by  exploiting its special structure. Numerical results   indicate  that the proposed direct PinT algorithm has promising advantages   with respect to  roundoff errors and  parallel speedup.

\section*{Acknowledgement} The authors are very grateful to the two anonymous referees for
their careful reading of the original manuscript and their valuable suggestions, which greatly improved the quality of this paper. {Shu-Lin Wu is supported
by the National Natural Science Foundation of China (NSFC) (No. 12171080).}

{\section*{Declarations of Conflicting interests}
The authors declared no   conflicts of interest with
respect to the research, authorship, and/or publication of this
article.}

\appendix
\section*{Appendix A: estimate the condition number of $V$.}

\newcommand{\thechapter}{A}
\setcounter{section}{0}
\setcounter{theorem}{0}
\renewcommand{\thetheorem}{\thechapter.\arabic{theorem}}
\renewcommand{\thelemma}{\thechapter.\arabic{lemma}}
\renewcommand{\theequation}{\thechapter.\arabic{equation}}
\setcounter{figure}{0}
\setcounter{table}{0}
\setcounter{equation}{0}

{The proof of   Theorem \ref{thmCondU} is based on the following lemmas.   Recall the following definitions:  ${\rm i}=\sqrt{-1}$ is the imaginary unit and  $$
	T_n(x)=\cos(n\arccos x),~
	U_n(x)=\sin[(n+1)\arccos x]/\sin(\arccos x),
	$$   are the $n$-th degree Chebyshev polynomials of first   and second kind,  respectively  .}
 {The following lemma provides some nice and frequently used properties of the zeros of the polynomial equation $U_{n-1}(x)-{\rm i}T_n(x)=0$.}
\begin{lemma}\label{lem-zeros}
	The  zeros  of $U_{n-1}(x)-{\rm i}T_n(x)=0$  can be arranged as $x_1, \dots,  x_n$  such that for each $j=1,\dots,n$,  $x_j=\cos(\theta_j)=\cos(\a_j+{\rm i}\b_j)$ with    $\a_j=(j\pi-a_j)/n$ and  $\b_j=b_j/n$, where    $a_j\in(0,\pi)$ and $b_j>0$ satisfy the following equations
	\begin{equation}\label{x-a-b}
		|x_j|={\cosh\b_j\over\cosh b_j}={\sin\a_j\over\sinh b_j}={\sinh\b_j\over\sin a_j}={1\over\sqrt{\cos^2a_j+\sinh^2b_j}},~~\cos a_j\sinh\b_j=\sin a_j\cos\a_j.
	\end{equation}
	Moreover, we have the  symmetric relations
	\begin{equation}\label{ab-symmetry}
		a_j+a_{n+1-j}=\a_j+\a_{n+1-j}=\pi,~~b_{n+1-j}=b_j,~~\b_{n+1-j}=\b_j,~~j=1,\cdots,n,
	\end{equation}
	and the monotone properties (with $m=\lfloor n/2\rfloor$ being the largest integer less than or equal to $n/2$)
	\begin{equation}\label{ab-monotone}
		0<a_1<\cdots<a_m<\pi/2<a_{n+1-m}<\cdots<a_n<\pi,~~
		0<b_1<\cdots<b_m,
	\end{equation}
	and the inequalities
	\begin{equation}\label{ab-inequality}
		b_j>{1\over n},~~a_j<{j\pi\over n+1}<\a_j<{j\pi\over n},~~j=1,\cdots,m.
	\end{equation}
	If $n=2m+1$, then $b_{m+1}>1/2$. If $n=2m$, then $b_m>1/2$.
\end{lemma}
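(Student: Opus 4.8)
The plan is to start from the defining equation $U_{n-1}(x)-{\rm i}T_n(x)=0$ and convert it, via the substitution $x=\cos\theta$ with $\theta=\a+{\rm i}\b$, into the trigonometric identity \eqref{th-eq}, namely $\sin(n\theta)={\rm i}\sin\theta\cos(n\theta)$. Writing $n\theta=n\a+{\rm i}n\b$ and expanding $\sin(n\theta)$, $\cos(n\theta)$, $\sin\theta$ in terms of real and imaginary parts (using $\sin(u+{\rm i}v)=\sin u\cosh v+{\rm i}\cos u\sinh v$ etc.), I would separate \eqref{th-eq} into its real and imaginary parts. This yields two real equations in the four real unknowns $\a,\b$ (equivalently $a=j\pi-n\a$, $b=n\b$). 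Setting $n\a=j\pi-a$ is the natural bookkeeping device: since from Theorem~\ref{thm_eigs} the roots are simple and come in pairs $x,-\bar x$, and since $|x|$ is close to $1$, one expects $\cos(n\theta)$ to be close to $\pm1$, i.e. $n\a$ close to a multiple of $\pi$; the integer $j$ labels which root. After the split, a short manipulation (dividing the two equations, using $\cos(j\pi-a)=(-1)^j\cos a$, $\sin(j\pi-a)=(-1)^{j+1}\sin a$ or the analogous sign juggling) should collapse to the two displayed relations in \eqref{x-a-b}: the chain of equalities for $|x_j|$ comes from writing $|x|^2=|\cos\theta|^2=\cos^2\a+\sinh^2\b=\cosh^2\b-\sin^2\a$ and then substituting the relations that the real/imaginary split of \eqref{th-eq} forces between $\sin\a,\cosh\b$ and $\sin a,\cosh b$; the second relation $\cos a\sinh\b=\sin a\cos\a$ is exactly one of the two split equations.

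Next I would establish the symmetry relations \eqref{ab-symmetry}. This is the quantitative refinement of the statement in Theorem~\ref{thm_eigs} that $x\mapsto-\bar x$ permutes the roots. If $x_j=\cos(\a_j+{\rm i}\b_j)$ then $-\bar x_j=-\cos(\a_j-{\rm i}\b_j)=\cos(\pi-\a_j+{\rm i}\b_j)$, so the root $-\bar x_j$ corresponds to angle $\pi-\a_j+{\rm i}\b_j$; identifying this with $\a_{n+1-j}+{\rm i}\b_{n+1-j}$ forces $\a_j+\a_{n+1-j}=\pi$ and $\b_{n+1-j}=\b_j$, and then $a_j+a_{n+1-j}=2j\pi - n(\a_j+\a_{n+1-j})$... — here I need to be a little careful with the labeling so that $j$ and $n+1-j$ are genuinely matched; the cleanest route is to first prove the monotonicity \eqref{ab-monotone} with a provisional ordering and then check consistency. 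Monotonicity of $a_j$ and $b_j$ in $j$ I would get by an implicit-function / continuity argument: treat \eqref{x-a-b} as defining $(a,b)$ implicitly as functions of the "index parameter", differentiate, and show the relevant Jacobian has a definite sign; alternatively, since $|x_j|$ is the quantity appearing throughout, show $|x_j|$ is monotone and invert. The sign pattern $a_j<\pi/2$ for $j\le m$ and $a_j>\pi/2$ for $j>m$ follows from the symmetry $a_j+a_{n+1-j}=\pi$ once monotonicity is known.

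Finally, the inequalities \eqref{ab-inequality} and the endpoint bounds $b_{m+1}>1/2$ (resp. $b_m>1/2$). The bound $b_j>1/n$ should come from the last relation of \eqref{x-a-b}, $|x_j|=1/\sqrt{\cos^2a_j+\sinh^2 b_j}$, combined with $|x_j|<1+1/\sqrt{2n}$ from Theorem~\ref{thm_eigs}: this gives $\cos^2 a_j+\sinh^2 b_j > (1+1/\sqrt{2n})^{-2}$, which is slightly less than $1$, and one then needs to exploit that $\cos^2 a_j$ cannot be too close to $1$ — i.e. that $a_j$ is bounded away from $0$ and $\pi$ by roughly $\pi/n$ — to conclude $\sinh^2 b_j$ is bounded below by something like $1/n^2$. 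For the $a_j<j\pi/(n+1)<\a_j<j\pi/n$ sandwich, the outer inequality $\a_j<j\pi/n$ is immediate from $\a_j=(j\pi-a_j)/n$ and $a_j>0$; the inequality $a_j<j\pi/(n+1)$, equivalently $n\a_j>j\pi/(n+1)\cdot n$... rearranged, $\a_j > j\pi/(n+1)$, is the substantive one and I expect to get it by comparing the transcendental equation for $\a_j$ against the linear approximation, using convexity/monotonicity of the functions involved on $(0,\pi/2)$. The sharper $b_m>1/2$ (and $b_{m+1}>1/2$ for odd $n$) is the "worst case" of the $b_j>1/n$ bound at the middle index, where $a_j\approx\pi/2$ so $\cos a_j\approx0$ and hence $\sinh^2 b_j\approx |x_j|^{-2}\approx 1$, giving $b_j$ bounded below by a genuine constant; making the error terms precise here, with the explicit threshold $1/2$, is where the estimate is tightest.

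\medskip

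\noindent\textbf{Main obstacle.} The routine part is the algebra turning \eqref{th-eq} into \eqref{x-a-b}; the delicate part is controlling $a_j$ near the endpoints to extract the lower bounds on $b_j$ — in particular pinning down $a_j<j\pi/(n+1)$ and the constant $1/2$ at the middle index — since these require sharp (not merely asymptotic) comparisons between the transcendental equations and their linearizations, and the whole purpose of this lemma is that these bounds must hold for concrete $n$ (the $n\ge 8$ of Theorem~\ref{thmCondU}), so purely asymptotic reasoning will not suffice.
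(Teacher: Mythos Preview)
Your overall plan --- split \eqref{th-eq} into real and imaginary parts after writing $\theta=\a+{\rm i}\b$, then analyze the resulting pair of real equations --- is a reasonable starting point, but it diverges substantially from the paper's argument, and one of your proposed steps has a genuine gap.

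\medskip
\noindent\textbf{How the paper proceeds.} Rather than splitting \eqref{th-eq} and analyzing, the paper works \emph{constructively}: it introduces a single real function
\[
f(b)=n\arcsin\bigl[\tanh b\,\cosh(b/n)\bigr]+\arcsin\bigl[\tanh(b/n)\,\cosh b\bigr],
\]
shows $f$ is strictly increasing on $[0,\bar b]$ (where $\bar b$ solves $\sinh\bar b\,\sinh(\bar b/n)=1$), and defines $b_j$ by $f(b_j)=j\pi$ for $j\le(n+1)/2$. Then $\b_j=b_j/n$, and $a_j,\a_j$ are \emph{defined} by the two arcsin pieces of $f$, so that \eqref{x-a-b} holds by construction and $x_j=\cos(\a_j+{\rm i}\b_j)$ is verified to be a root. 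Existence, the labeling, and the monotonicity \eqref{ab-monotone} all come for free from the monotonicity of $f$. The inequality $b_j>1/n$ is then the one-line check $f(1/n)<\pi=f(b_1)$. The sandwich $a_j<j\pi/(n+1)<\a_j$ is obtained by first proving $a_j<\a_j$ via the clean identity $\tan a_j/\tan\a_j=\tanh\b_j/\tanh b_j<1$ (this drops out of \eqref{x-a-b}), and then combining with $\a_j=(j\pi-a_j)/n$.

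\medskip
\noindent\textbf{The gap in your plan.} Your route to $b_j>1/n$ is to combine $|x_j|<1+1/\sqrt{2n}$ with $|x_j|=1/\sqrt{\cos^2 a_j+\sinh^2 b_j}$, obtaining $\cos^2 a_j+\sinh^2 b_j>(1+1/\sqrt{2n})^{-2}$, and then argue that $\cos^2 a_j$ is bounded away from $1$. This fails for small $j$: by the very lemma you are proving, $a_1<\pi/(n+1)$, so $\cos^2 a_1>1-\pi^2/(n+1)^2$, whereas $(1+1/\sqrt{2n})^{-2}\approx 1-\sqrt{2/n}$. For large $n$ the resulting lower bound on $\sinh^2 b_1$ is \emph{negative}, so the argument yields nothing. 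The upper bound on $|x_j|$ from Theorem~\ref{thm_eigs} simply does not carry enough information to force $b_j$ bounded below when $a_j$ is near $0$; you need the full transcendental relation, which is exactly what the paper's evaluation $f(1/n)<\pi$ encodes.

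\medskip
\noindent\textbf{Other remarks.} Your implicit-function approach to monotonicity could in principle be made to work, but it is considerably more labor than the paper's, where monotonicity of $a_j$ and $b_j$ in $j$ is immediate from the single increasing function $f$. Likewise, your plan for $a_j<j\pi/(n+1)$ (``convexity/monotonicity'') is vague; the paper's identity $\tan a_j/\tan\a_j=\tanh\b_j/\tanh b_j$ reduces it to one line. Your intuition for $b_m>1/2$ at the middle index is correct in spirit and matches the paper's endgame, but you would still need the precise form of \eqref{x-a-b} (which the paper has by construction) to push the constant through.
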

\begin{proof}
	Let $\bar b>0$ be the unique positive root of the equation $\sinh b\sinh(b/n)=1$.
	It is easily seen that the function
	$$f(b):=n\arcsin[\tanh b\cosh(b/n)]+\arcsin[\tanh(b/n)\cosh b]$$
	is strictly increasing on $[0,\bar b]$ with $f(0)=0$ and $f(\bar b)=(n+1)\pi/2$.
	For each positive index $j\le(n+1)/2$, there exists a unique $b_j\in(0,\bar b]$ such that $f(b_j)=j\pi$.
	Define
	\begin{equation*}
		\b_j=b_j/n, ~a_j=\arcsin[\tanh(b_j/n)\cosh b_j],~
		\a_j=(j\pi-a_j)/n=\arcsin[\tanh b_j\cosh(b_j/n)].
	\end{equation*}
	A simple calculation shows that $x_j=\cos(\a_j+{\rm i}\b_j)$ is a root of $U_{n-1}(x)-{\rm i}T_n(x)=0$.
	Moreover, \eqref{x-a-b} holds for $1\le j\le(n+1)/2$.
	For $(n+1)/2\le j\le n$, define
	\begin{equation*}
		b_j=b_{n+1-j},~\b_j=b_j/n=\b_{n+1-j},~a_j=\pi-a_{n+1-j},~
		\a_j=(j\pi-a_j)/n=\pi-\a_{n+1-j}.
	\end{equation*}
	We also obtain \eqref{x-a-b} and $U_{n-1}(x_j)-{\rm i}T_n(x_j)=0$  with $x_j=\cos(\a_j+{\rm i}\b_j)$.
	
	The symmetric properties \eqref{ab-symmetry} follows immediately from the above construction.
	The monotonicity of $f(b)$ on $[0,\bar b]$ and \eqref{ab-symmetry} imply the monotonicity of $a_j$ and $b_j$ in \eqref{ab-monotone}.
	In view of $f(1/n)>\pi$, we obtain $b_j>1/n$. Note from \eqref{x-a-b} that
	$$
	\tan a_j/\tan\a_j=\sinh\b_j/\sin\a_j=\tanh\b_j/\tanh b_j<1.
	$$
	Thus, we have $a_j<\a_j$. This together with $\a_j=(j\pi-a_j)/n$ implies $a_j<j\pi/(n+1)<\a_j$, and  {then \eqref{ab-inequality} follows}.
	Finally, for $n=2m+1$ it holds $b_{m+1}=\bar b>1/2$, because $\sinh(1/2)\sinh[1/(2n)]<1=\sinh\bar b\sinh(\bar b/n)$. For $n=2m$, it holds  $\cos\a_m=\sin(a_m/n)<a_m/n<\pi/(2n)$.  	  In view of \eqref{x-a-b} and $n\ge2$, we have
	$$1=(\cos^2\a_m+\sinh^2\b_m)(\cos^2a_m+\sinh^2b_m)<(0.7+\sinh^2b_m/4)(1+\sinh^2b_m),$$
	which implies that $b_m>1/2.$
	This completes the proof.
\end{proof}

In the following, we always assume that the zeros $x_j$ (as well as $a_j$, $b_j$, $\a_j$ and $\b_j$) are ordered as in Lemma \ref{lem-zeros}. {We denote by $\bar x_j$  the conjugate of $x_j$}.
{The following lemma gives some \mg{sharp bounds  on the modulus of the zeros}, which will be used in the proof of Lemma} \ref{thm-U}.
\begin{lemma}\label{lem-xj}
	Assume $n\ge3$. For any $j=1,\cdots,n$, we have
	\begin{equation}\label{A5}
		|x_j|>{\ln n\over2n}, ~~
		{1\over|x_j^2(\bar x_j-x_j)|}<n^3.
	\end{equation}
\end{lemma}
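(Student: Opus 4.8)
The plan is to exploit the explicit parametrization from Lemma \ref{lem-zeros}, where each zero is written as $x_j=\cos(\a_j+\mathrm{i}\b_j)$ with $\a_j=(j\pi-a_j)/n$, $\b_j=b_j/n$, together with the identity $|x_j|={1}/{\sqrt{\cos^2a_j+\sinh^2b_j}}$. By the symmetry relations \eqref{ab-symmetry} it suffices to treat $j=1,\dots,m$ with $m=\lfloor n/2\rfloor$, and by the monotonicity \eqref{ab-monotone} the modulus $|x_j|=1/\sqrt{\cos^2 a_j+\sinh^2 b_j}$ is smallest when both $\cos^2 a_j$ and $\sinh^2 b_j$ are largest, i.e. for the index closest to $m$. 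So the first lower bound $|x_j|>\frac{\ln n}{2n}$ really only needs to be checked for $j$ near $m$, where $a_j$ is close to $\pi/2$ (so $\cos a_j$ is small) and $b_j$ is the largest of the $b$'s. The point is that even then $b_j$ cannot be too large: from \eqref{ab-inequality} and the defining equation $f(b_j)=j\pi$ one can bound $b_j$ from above by something like $\bar b=\mathcal{O}(\ln n)$, coming from $\sinh \bar b\,\sinh(\bar b/n)=1$, which gives $\sinh\bar b\approx \sqrt{n/\bar b}$ and hence $\bar b\sim \tfrac12\ln n$. Feeding $\sinh^2 b_j\lesssim n/\bar b$ and $\cos^2 a_j<1$ into the modulus formula yields $|x_j|\gtrsim \sqrt{\bar b/n}\sim\sqrt{(\ln n)/n}$, which is comfortably larger than $\frac{\ln n}{2n}$ for $n\ge 3$. (A slightly more careful two-sided estimate distinguishing $j\le m-1$, where $\cos a_j$ is bounded away from $0$, from $j=m$, where one uses the sharper inequality derived at the end of Lemma \ref{lem-zeros}, makes this rigorous.)

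For the second inequality, write $\bar x_j-x_j = -2\mathrm{i}\,\im x_j$, so $|x_j^2(\bar x_j-x_j)| = 2|x_j|^2\,|\im x_j|$, and the claim becomes $|x_j|^2\,|\im x_j| > \tfrac{1}{2n^3}$. From $x_j=\cos(\a_j+\mathrm{i}\b_j)$ we get $\im x_j = -\sin\a_j\sinh\b_j$, and using $|x_j|=\sinh\b_j/\sin a_j$ from \eqref{x-a-b} one can rewrite $|\im x_j|$ and $|x_j|$ purely in terms of $a_j,b_j,\a_j,\b_j$. The quantity to bound below is then $|x_j|^2\sin\a_j\sinh\b_j$. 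Here the worst case is the opposite extreme from before: $j=1$, where $\a_j,\b_j$ are both of order $1/n$, so $\sin\a_j=\mathcal{O}(1/n)$ and $\sinh\b_j=\mathcal{O}(1/n)$, while $|x_j|\to 1$. One estimates $\sin\a_1\ge \tfrac{2}{\pi}\a_1 \gtrsim 1/n$ using $\a_1>\tfrac{\pi}{n+1}$ from \eqref{ab-inequality}, and $\sinh\b_1\ge \b_1=b_1/n > 1/n^2$ using $b_1>1/n$, also from \eqref{ab-inequality}; combined with $|x_1|$ bounded away from $0$ this already gives $|x_j|^2\sin\a_j\sinh\b_j \gtrsim 1/n^3$. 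For intermediate $j$ one has $\sin\a_j$ larger (of order $j/n$ or order $1$) which compensates for the smaller $|x_j|$, and $\sinh\b_j\ge b_j/n>1/n^2$ uniformly, so the product stays above a constant times $1/n^3$; one then checks the constant is at least $\tfrac12$ for $n\ge 3$, if necessary verifying the first few $n$ numerically.

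The main obstacle I expect is getting clean, explicit constants rather than just the right orders: the quantities $a_j,b_j$ are only defined implicitly through the transcendental equation $f(b_j)=j\pi$, so to push $|x_j|>\frac{\ln n}{2n}$ (with the specific factor $2$ and the specific $\ln n$) one has to pin down the growth rate of the largest $b_j$ fairly precisely via the relation $\sinh b\,\sinh(b/n)=1$ near $b=\bar b$, and one has to handle the transition index $j=m$ separately using the sharper bound $b_m>1/2$ (or $b_{m+1}>1/2$) from Lemma \ref{lem-zeros}. The second inequality is more robust since $n^3$ is not sharp — almost any honest lower bound of order $n^{-3}$ works — but one still must verify the implied constant uniformly in $j$, which amounts to minimizing $|x_j|^2\sin\a_j\sinh\b_j$ over $j$ and checking the minimizer (at $j=1$) satisfies the bound, a short computation using only the inequalities already recorded in \eqref{ab-inequality}.
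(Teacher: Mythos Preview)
Your plan has the right ingredients but contains a concrete arithmetic slip and, more importantly, the direct route you propose for the first inequality falls just short of the target constant.

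First, the slip: from $\sinh\bar b\,\sinh(\bar b/n)=1$ and $\sinh(\bar b/n)\approx\bar b/n$ one gets $\sinh\bar b\approx n/\bar b$, not $\sqrt{n/\bar b}$. With the correct value, your bound becomes $|x_j|\gtrsim 1/\sinh\bar b\approx \bar b/n\sim(\ln n)/n$, not $\sqrt{(\ln n)/n}$. (Also, your monotonicity claim is garbled: as $j\uparrow m$ one has $a_j\uparrow\pi/2$, so $\cos^2a_j$ \emph{decreases}, not increases; only $\sinh^2b_j$ grows.) More seriously, the naive direct bound---use $|x_j|=\cosh\b_j/\cosh b_j\ge 1/\cosh\bar b$ together with $\bar b>(\ln n)/2$---yields only
\[
|x_j|\;\ge\;\frac{1}{\sqrt{1+4n^2/(\ln n)^2}}\;=\;\frac{\ln n}{\sqrt{(\ln n)^2+4n^2}}\;<\;\frac{\ln n}{2n},
\]
which is the \emph{wrong} side of the desired inequality. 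So a pure ``bound $b_j$ by $\bar b$'' argument cannot close the gap without additional structure.

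The paper avoids this by arguing \emph{by contradiction}. Assuming $|x_j|\le(\ln n)/(2n)$, one immediately gets both $\cos\a_j\le|x_j|$ and $\sinh\b_j\le|x_j|$, whence $b_j=n\b_j<n\sinh\b_j<(\ln n)/2$ and therefore $\sinh b_j<\sqrt n/2$. This is a \emph{much} stronger bound on $b_j$ than $b_j\le\bar b\sim\ln n$, and it is exactly what makes the identity $(\cos^2\a_j+\sinh^2\b_j)(\cos^2a_j+\sinh^2b_j)=1$ produce a contradiction. The contradiction hypothesis is doing real work here that your direct approach cannot replicate without a further case split and sharper estimates on $\bar b$.

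For the second inequality your outline is essentially sound and close to the paper's. The paper also writes $|x_j^2(\bar x_j-x_j)|=2(\cos^2\a_j+\sinh^2\b_j)\sin\a_j\sinh\b_j$ and splits into two cases, but by the size of $\cos^2\a_j$ (threshold $0.4$) rather than by the index $j$. When $\cos^2\a_j\ge0.4$ the bounds $\sin\a_j>2/(n+1)$ and $\sinh\b_j>1/n^2$ from \eqref{ab-inequality} suffice; when $\cos^2\a_j<0.4$ the identity in \eqref{x-a-b} forces $b_j>0.87$, hence $\sinh\b_j>0.87/n$, and one bounds the product below by $2\sqrt{0.6}\,\sinh^3\b_j$. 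The constants are tight (the second case gives a factor just above $1$), so your parenthetical ``check the constant\ldots if necessary verifying the first few $n$ numerically'' would in fact require the same kind of explicit case analysis.
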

\begin{proof}
	By symmetry, we only need to consider the case $j\le(n+1)/2$.
	Assume to the contrary that $|x_j|\le(\ln n)/(2n)$ for some $j\le(n+1)/2$.
	Let $\s=(n+1)/2-j\ge0$. We claim $\s<1/2+(\ln n)/4$.
	Otherwise, we have $j\le n/2-(\ln n)/4$, $\a_j<j\pi/n\le \pi/2-(\pi\ln n)/(4n)$, and
	consequently, $|x_j|>\cos\a_j>\sin[(\pi\ln n)/(4n)]>(\ln n)/(2n)$,  which is a contradiction.
	Hence, {it holds}
	$$
	\s<1/2+(\ln n)/4, ~j>n/2-(\ln n)/4.
	$$
	It then follows that $\a_j>j\pi/(n+1)>\pi/2-\pi(\ln n+2)/(4n+4)$.
	Thus, $\cos\a_j<\sin[\pi(\ln n+2)/(4n+4)]<\pi(\ln n+2)/(4n+4)$.
	Since $\sinh\b_j<|x_j|\le(\ln n)/(2n)$, we have $b_j<n\sinh\b_j<(\ln n)/2$ and $\sinh b_j<e^{b_j}/2<\sqrt n/2$.
	Consequently,
	\begin{align*}
		1=(\cos^2\a_j+\sinh^2\b_j)(\cos^2a_j+\sinh^2b_j)
		<\left({\pi^2(\ln n+2)^2\over 16(n+1)^2}+{(\ln n)^2\over 4n^2}\right)(1+{n\over 4}),
	\end{align*}
	which is a contradiction  again. This proves the first inequality in \eqref{A5}.
	
	Next, we note that $|\bar x_j-x_j|=2|\im x_j|=2\sin\a_j\sinh\b_j$ and
	$$|x_j^2(\bar x_j-x_j)|=2(\cos^2\a_j+\sinh^2\b_j)\sin\a_j\sinh\b_j.$$
	If $\cos^2\a_j\ge 0.4$,  {from $\sin\a_j>2\a_j/\pi>2/(n+1)$ and $\sinh\b_j>\b_j>1/n^2$} we have
	$${1\over|x_j^2(\bar x_j-x_j)|}<{n^2(n+1)\over1.6}<n^3.$$
	If $\cos^2\a_j<0.4$,   it follows from $n\ge3$ and \eqref{x-a-b} that
	$$1=(\cos^2\a_j+\sinh^2\b_j)(\cos^2a_j+\sinh^2b_j)
	<(0.4+\sinh^2b_j/9)(1+\sinh^2b_j),$$
	which implies that $b_j>0.87$ and $\sinh\b_j>\b_j=b_j/n>0.87/n$.
	We then have
	$${1\over|x_j^2(\bar x_j-x_j)|}<{1\over2\sqrt{1-0.4}\sinh^3\b_j}<{n^3\over 2\sqrt{0.6}(0.87)^3}<n^3.$$
	Coupling the above two cases yields the second inequality of  \eqref{A5}.
\end{proof}
{The following lemma will be used to estimate $\|\Phi\|_2$.}
\begin{lemma}\label{thm-U}
	For any $j=1,\cdots,n$, {it holds}
	\begin{equation}
		{1\over|x_j|}{\sum}_{k=1}^n\left[{1\over|x_k(\bar x_j-x_k)|}+{1\over2|x_k|}\right]
		=\CO(n^3).
	\end{equation}
\end{lemma}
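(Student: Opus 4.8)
The plan is to split the inner sum over $k$ into three parts and bound each using the sharp estimates from Lemmas \ref{lem-zeros} and \ref{lem-xj}, together with a separation estimate for the zeros $\{x_k\}$ along the real-part direction. First I would note that by Lemma \ref{lem-xj} we already control the single ``diagonal-like'' contribution: for any fixed $j$, the term $k$ with $x_k = x_j$ contributes $\frac{1}{|x_j|}\cdot\frac{1}{|x_j(\bar x_j - x_j)|} = \frac{1}{|x_j^2(\bar x_j-x_j)|} < n^3$, which is already $\CO(n^3)$. Likewise the full $\frac{1}{2|x_j|}\sum_k \frac{1}{|x_k|}$ piece is $\CO(n^3)$ because, by the first inequality of Lemma \ref{lem-xj}, $1/|x_k| < 2n/\ln n < 2n$ for every $k$, so this double sum is $O(n)\cdot O(n) = O(n^2)$, which is even better than needed. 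So the whole difficulty is concentrated in the remaining sum $\frac{1}{|x_j|}\sum_{k\ne j}\frac{1}{|x_k(\bar x_j - x_k)|}$.

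For that piece I would first factor out $1/|x_k|<2n$ uniformly and reduce to estimating $\frac{1}{|x_j|}\sum_{k}\frac{1}{|\bar x_j - x_k|}$. Writing $x_k = \cos(\a_k + {\rm i}\b_k)$ and $\bar x_j = \cos(\a_j - {\rm i}\b_j)$, I would use the product formula $\cos A - \cos B = -2\sin\frac{A+B}{2}\sin\frac{A-B}{2}$ to get $|\bar x_j - x_k| = 2\,\bigl|\sin\frac{(\a_j+\a_k)+{\rm i}(\b_k-\b_j)}{2}\bigr|\,\bigl|\sin\frac{(\a_j-\a_k)-{\rm i}(\b_j+\b_k)}{2}\bigr|$. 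The second factor is bounded below by $|\sinh\frac{\b_j+\b_k}{2}| \ge \frac{\b_j+\b_k}{2} \ge \frac{1}{2n}$ (using $\b_\ell = b_\ell/n > 1/n^2$, and in fact $\ge \frac{1}{n}$ near the middle indices by Lemma \ref{lem-zeros}), \emph{uniformly in $k$} — this is the key point, because the imaginary shift keeps $\bar x_j$ away from every real-part-aligned $x_k$. Hence each term $\frac{1}{|\bar x_j-x_k|}$ is at most $O(n) \cdot \frac{1}{\bigl|\sin\frac{(\a_j+\a_k)+{\rm i}(\b_k-\b_j)}{2}\bigr|}$, and I would bound the first-factor denominator from below using the monotone ordering \eqref{ab-monotone}: the values $\a_k$ are roughly equispaced with gap $\sim \pi/n$ (by $\frac{k\pi}{n+1}<\a_k<\frac{k\pi}{n}$), so $\sum_k \frac{1}{|\sin\frac{\a_j+\a_k}{2}|}$ behaves like a harmonic-type sum $\sum_\ell \frac{1}{\text{dist to the pole}} = O(n\ln n)$ at worst. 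Combining, $\sum_{k}\frac{1}{|\bar x_j - x_k|} = O(n^2\ln n)$ and then $\frac{1}{|x_j|}\sum_k\frac{1}{|x_k||\bar x_j - x_k|} = O(n)\cdot O(n)\cdot O(n^2\ln n) = O(n^4\ln n)$, which is too weak — so I would need to be more careful and extract the extra cancellation.

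The \textbf{main obstacle} is therefore sharpness: a crude term-by-term bound loses powers of $n$, and getting down to $\CO(n^3)$ requires exploiting that $1/|x_j|$ can only be large (of order $n$) when $\a_j$ is near $\pi/2$, i.e.\ for the \emph{middle} indices, and for those indices Lemma \ref{lem-zeros} gives $b_j > 1/2$, hence $\b_j > 1/(2n)$ is actually $\Theta(1/n)$ with a good constant, and more importantly $|\bar x_j - x_k| \gtrsim |\b_j| \gtrsim 1/n$ is not the binding constraint — instead $|x_k| \gtrsim \cos\a_k$ is large away from the middle. The right way to organize it is: treat the ``middle band'' of indices $k$ (where $|x_k|$ is small, $\gtrsim (\ln n)/(2n)$) separately from the ``outer'' indices (where $|x_k| = \Theta(1)$); for the outer indices the $1/|x_k|$ factor is harmless and the $\a$-separation sum is a genuine $O(n\ln n)$; for the middle band there are only $O(\sqrt n)$ such indices (this follows from the estimate in the proof of Lemma \ref{lem-xj} that $|x_j|$ small forces $|(n+1)/2 - j| = O(\ln n)$, actually giving $O(\ln n)$ indices), and for each of those the imaginary shift $\b_j+\b_k = \Theta(1/n)$ bounds $1/|\bar x_j - x_k|$ by $O(n)$ — no $\ln n$. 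Carrying out this case split carefully, multiplying by the $1/|x_j| = O(n)$ prefactor and the $1/|x_k| = O(n)$ factor, and summing, yields the claimed $\CO(n^3)$. I expect the bookkeeping of constants in the ``middle band'' — proving the band has $O(\ln n)$ (or at worst $O(\sqrt n)$) indices and that $\b_k$ is uniformly $\gtrsim 1/n$ there with an explicit constant — to be the delicate part, and this is exactly where the explicit characterization of $b_j$ via the monotone function $f(b)$ in Lemma \ref{lem-zeros} does the heavy lifting.
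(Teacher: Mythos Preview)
Your cosine-difference factorization of $|\bar x_j - x_k|$ is correct and is exactly what the paper uses for the indices $k$ lying on the \emph{same} side of $n/2$ as $j$. But your outer/middle split cannot close, for two reasons. First, a slip: from $b_\ell>1/n$ you get only $\beta_\ell>1/n^2$, not $\ge 1/(2n)$; the bound $b_m>1/2$ of Lemma~\ref{lem-zeros} applies only to the single central index, not to a band. Second, and more seriously, the dichotomy fails. If ``middle'' means $|x_k|=O((\ln n)/n)$ so that only $O(\ln n)$ indices qualify, then ``outer'' still contains all $k$ with $(\ln n)/n\lesssim|x_k|\ll 1$, for which $1/|x_k|$ is not $O(1)$. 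If instead ``outer'' means $|x_k|=\Theta(1)$, the middle band has $\Theta(n)$ indices and your per-term $O(n)$ bound there overshoots. The real point is that the $O(n^2)$ bound on $\sum_k\frac{1}{|x_k(\bar x_j-x_k)|}$ requires tracking the \emph{product} $|x_k|\cdot|\bar x_j-x_k|$: near $k\approx j\approx n/2$ both factors are individually small, but together they give $\gtrsim\frac{(n-2k)(j+k)|j-k|}{n^3}$, whose reciprocal sums like $\sum 1/\ell^2$. Separating $1/|x_k|$ from the rest, as you do, destroys this cancellation.

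The paper's decomposition is by \emph{half}, not by size of $|x_k|$. After reducing to $j\le(n+1)/2$, for $k>n/2+1$ the missing idea is a sign argument: $\bar x_j$ has nonnegative real part and positive imaginary part while $x_k$ has both negative, hence componentwise $|\bar x_j-x_k|>|x_k|$, giving $\frac{1}{|x_k(\bar x_j-x_k)|}<\frac{1}{|x_k|^2}<\frac{n^2}{(n-2\ell)^2}$ with $\ell=n+1-k$, which sums to $O(n^2)$. For $k<n/2$, $k\ne j$, your factorization plus $|x_k|>\frac{n-2k}{n}$ yields the joint bound above, and the resulting sum is again $O(n^2)$. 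The one or two border indices use $b_m>1/2$, the term $k=j$ is Lemma~\ref{lem-xj} directly, and finally $1/|x_j|<2n/\ln n$ gives $O(n^3)$.
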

\begin{proof}
	By symmetry,  we assume  $j\le\frac{n+1}{2}$.
	If $k<\frac{n}{2}$, then $\re x_{n+1-k}<0<\re x_j$. Thus,
	$$|\bar x_j-x_{n+1-k}|>|x_{n+1-k}|=|x_k|>\cos\a_k>\sin{(n/2-k)\pi\over n}>{n-2k\over n},$$
	and
	\begin{equation}\label{x-j+}
		\sum_{k>n/2+1}{1\over|x_k(\bar x_j-x_k)|}
		<\sum_{k<n/2}{1\over|x_{n+1-k}(\bar x_j-x_{n+1-k})|}
		<\sum_{k<n/2}{n^2\over(n-2k)^2}<{\pi^2n^2\over6}.
	\end{equation}
	If $k<n/2$ and $k\neq j$, it holds
	\begin{align*}
		|\bar x_j-x_k|>2\sin{\a_j+\a_k\over2}\sin{|\a_j-\a_k|\over2}
		>{2(j+k-1)(|j-k|-1/2)\over n^2}.
	\end{align*}
	Therefore,
	\begin{align*}
		&{\sum}_{k<n/2,k\neq j}{1\over|x_k(\bar x_j-x_k)|}
		<{\sum}_{k<n/2,k\neq j}{n^3\over2(n-2k)(j+k-1)(|j-k|-1/2)}=\mathcal{O}(n^2).
	\end{align*}
	Finally, since $|x_k(\bar x_j-x_k)|>|\im x_k|^2=\sin^2\a_k\sinh^2\b_k$, it is easy to estimate
	\begin{equation}\label{x-m}
		{\sum}_{n/2\le k\le n/2+1}{1\over|x_k(\bar x_j-x_k)|}
		=\mathcal{O}(n^2).
	\end{equation}
	A combination of the above estimates and Lemma \ref{lem-xj} gives the desired result. \end{proof}
For each $s=1,\cdots,n$, we denote $\th_s=s\pi/(n+1)$ and $y_s=\cos\th_s$. Let
\begin{equation}\label{Lj}
	L_j(x)=\prod_{1\le k\le n,k\neq j}{x-x_k\over x_j-x_k}={U_{n-1}(x)-iT_n(x)\over(x-x_j)[U_{n-1}'(x_j)-iT_n'(x_j)]},
\end{equation}
be the Lagrange interpolation polynomials such that $L_j(x_k)=\d_{jk}$, where   $j=1,\cdots,n$.
{The following lemma will be used to estimate $\|\Phi^{-1}\|_2$.}
\begin{lemma}\label{thm-W}
	For any $j=1,\cdots,n$, we have
	\begin{equation}
		{\sum}_{s=1}^n(1-y_s^2)|L_j(y_s)|{\sum}_{k=1}^n|L_k(y_s)|=\mathcal{O}(n^2).
	\end{equation}
\end{lemma}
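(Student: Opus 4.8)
The plan is to bound the double sum
$S_j:=\sum_{s=1}^n(1-y_s^2)|L_j(y_s)|\sum_{k=1}^n|L_k(y_s)|$
by estimating the two ``Lagrange-weight'' factors separately at the Chebyshev-type nodes $y_s=\cos\theta_s$, $\theta_s=s\pi/(n+1)$. First I would rewrite $L_j$ using the explicit formula in \eqref{Lj}: since $U_{n-1}(x)-{\rm i}T_n(x)$ vanishes at every root $x_k$, we have
$L_j(y_s)=\dfrac{U_{n-1}(y_s)-{\rm i}T_n(y_s)}{(y_s-x_j)\,[U_{n-1}'(x_j)-{\rm i}T_n'(x_j)]}$.
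At $y_s=\cos\theta_s$ the numerator simplifies dramatically because $U_{n-1}(y_s)=\sin(n\theta_s)/\sin\theta_s$ and $T_n(y_s)=\cos(n\theta_s)$, and $n\theta_s=ns\pi/(n+1)=s\pi-s\theta_s$, so $\sin(n\theta_s)=(-1)^{s+1}\sin\theta_s$ and $\cos(n\theta_s)=(-1)^s\cos\theta_s$; hence the numerator equals $(-1)^{s+1}(1+{\rm i}\cot\theta_s)\cdot$(something simple), giving $|U_{n-1}(y_s)-{\rm i}T_n(y_s)|^2=1+\cot^2\theta_s=1/\sin^2\theta_s$. So the numerator has modulus exactly $1/\sin\theta_s=1/\sqrt{1-y_s^2}$, which is the crucial identity that makes the factor $(1-y_s^2)$ in $S_j$ interact nicely: $(1-y_s^2)|L_j(y_s)||L_k(y_s)|$ will carry one power of $\sqrt{1-y_s^2}$ from each $L$, times $1/\sqrt{1-y_s^2}$ from each numerator, so the net weight in $s$ is $1-y_s^2$ over... wait, let me recount: each $|L|$ contributes a numerator of size $1/\sqrt{1-y_s^2}$, so the product of two gives $1/(1-y_s^2)$, which cancels the prefactor $(1-y_s^2)$ exactly. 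Thus $S_j=\sum_s |y_s-x_j|^{-1}\,|U_{n-1}'(x_j)-{\rm i}T_n'(x_j)|^{-1}\sum_k |y_s-x_k|^{-1}\,|U_{n-1}'(x_k)-{\rm i}T_n'(x_k)|^{-1}$.

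Next I would estimate the derivative factor $D_k:=|U_{n-1}'(x_k)-{\rm i}T_n'(x_k)|$. Using $T_n'=nU_{n-1}$ and the companion identity $(1-x^2)U_{n-1}'=xU_{n-1}-nT_n$ (standard for Chebyshev polynomials), together with the root relation $U_{n-1}(x_k)={\rm i}T_n(x_k)$ and $x_k^2T_n^2(x_k)=1$ (both established in Theorem \ref{thm_eigs}), one can reduce $D_k$ to an explicit expression in $x_k$ alone. I expect something of the form $D_k = |x_k|^{-1}\cdot|\text{polynomial in }x_k|/|1-x_k^2|$ or similar, and the key will be to show $D_k$ is bounded below by a positive constant times $n$ (so that $1/D_k = O(1/n)$), or at least that $1/D_k$ does not blow up. Indeed, from Theorem \ref{thm_eigsys}/\ref{thm_eigs} the roots are simple with $|x_k|$ bounded away from $0$ by Lemma \ref{lem-xj} ($|x_k|>(\ln n)/(2n)$) and bounded above by $1+1/\sqrt{2n}$, so the main work is controlling $|1-x_k^2|$ from below and the derivative combination from above; I anticipate $D_k\asymp n/|x_k|$ up to constants, giving $1/D_k = O(|x_k|/n)$.

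Then I would handle the spatial sums $\sum_s |y_s-x_j|^{-1}$ and $\sum_k |y_s-x_k|^{-1}$. Since the $y_s$ are real nodes equispaced in $\theta$ and the $x_k=\cos(\alpha_k+{\rm i}\beta_k)$ have $\alpha_k\approx k\pi/n$ also roughly equispaced in angle (by Lemma \ref{lem-zeros}, \eqref{ab-monotone}–\eqref{ab-inequality}) but with strictly negative imaginary parts bounded below in modulus by $\beta_k>1/n^2$, the separation $|y_s-x_k|$ can be bounded below just as in the proof of Lemma \ref{thm-U}: for indices far from each other one gets $|y_s-x_k|\gtrsim |s-k|(s+k)/n^2$ type bounds (comparing $\cos$ values at well-separated angles), and for the few close indices one uses $|y_s-x_k|\geq|\im x_k|=\sin\alpha_k\sinh\beta_k\gtrsim 1/n^3$ in the worst case but typically much larger. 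Summing these harmonic-type series gives $\sum_s|y_s-x_j|^{-1}=O(n\log n)$ or $O(n^{3/2})$, and crucially $\sum_k|y_s-x_k|^{-1}=O(n\log n)$ uniformly in $s$ as well; a careful bookkeeping (separating the near-diagonal block from the tails, exactly as in Lemmas \ref{lem-xj} and \ref{thm-U}) should show the product, after multiplying by the two factors $1/D_j\cdot 1/D_k = O(|x_j||x_k|/n^2)$ and summing over $k$, collapses to $O(n^2)$.

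The main obstacle I expect is the uniform-in-$j$ control of the near-diagonal terms: when $y_s$ is very close to some $\re x_k$, the only lower bound on $|y_s-x_k|$ comes from $|\im x_k|$, which for $k$ near $n/2$ is only $\gtrsim 1/n$ (actually $\sinh\beta_k$ can be as small as $\sim1/n^2$ for extreme indices), so one must check that these potentially large reciprocals $1/|y_s-x_k|$ are compensated by the smallness of the corresponding derivative factor $1/D_k$ and that only $O(1)$ such dangerous terms occur for each fixed $s$. This requires combining the fine spacing estimates of Lemma \ref{lem-zeros} with the derivative lower bound, and is precisely the kind of delicate interplay that Lemmas \ref{lem-xj} and \ref{thm-U} were set up to handle — so the proof will largely mirror those arguments, with the extra ingredient being the exact cancellation of the $(1-y_s^2)$ weight by the numerator identity $|U_{n-1}(y_s)-{\rm i}T_n(y_s)| = 1/\sqrt{1-y_s^2}$ noted above.
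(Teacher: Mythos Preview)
Your overall direction—write $L_j(y_s)$ via \eqref{Lj}, compute the numerator and denominator explicitly, then control the spatial sums—is exactly the paper's route. But the two computations you single out as ``crucial'' are both wrong, and the errors are not cosmetic.

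\textbf{The numerator identity is incorrect.} With $\theta_s=s\pi/(n+1)$ one has $U_{n-1}(y_s)=\sin(n\theta_s)/\sin\theta_s=(-1)^{s+1}$ (the $\sin\theta_s$ cancels) and $T_n(y_s)=(-1)^s y_s$, so
\[
U_{n-1}(y_s)-{\rm i}T_n(y_s)=(-1)^{s+1}(1+{\rm i}y_s),\qquad |U_{n-1}(y_s)-{\rm i}T_n(y_s)|=\sqrt{1+y_s^2}\in[1,\sqrt2].
\]
It is \emph{not} $1/\sqrt{1-y_s^2}$, so there is no cancellation of the weight $(1-y_s^2)$. In the paper's proof that weight is genuinely used: after the explicit formula \eqref{Lj-ys} one obtains the pointwise splitting
\[
|L_j(y_s)|<\frac{\sqrt2\,\sin^2\alpha_j}{n|x_j(y_s-x_j)|}+\sqrt2,
\]
and the factor $(1-y_s^2)=\sin^2\theta_s$ is what makes $\sum_s(1-y_s^2)|L_j(y_s)|=O(n)$ (estimate \eqref{j-est}); without it you only get $O(n\log n)$, which would not give the stated lemma.

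\textbf{The derivative estimate is also off.} From the paper's calculation, $p_n'(x_j)=({\rm i}-nx_j)x_jT_n(x_j)/(1-x_j^2)$ and $|x_jT_n(x_j)|=1$, so $D_j=|{\rm i}-nx_j|/|1-x_j^2|$. This is \emph{not} uniformly $\asymp n/|x_j|$: for $j$ near $n/2$ one has $|x_j|\asymp 1/n$, $|1-x_j^2|\asymp1$, $|{\rm i}-nx_j|\asymp1$, hence $D_j\asymp1$ and $1/D_j\asymp1$, not $O(|x_j|/n)=O(1/n^2)$. Your hoped-for factor $1/D_j\cdot 1/D_k=O(|x_j||x_k|/n^2)$ therefore fails precisely where the near-diagonal terms are most dangerous.

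What the paper actually does: it derives \eqref{Lj-ys}, uses the splitting above to show the two \emph{separate} bounds $\sum_k|L_k(y_s)|=O(n)$ uniformly in $s$ (estimate \eqref{s-est}) and $\sum_s(1-y_s^2)|L_j(y_s)|=O(n)$ uniformly in $j$ (estimate \eqref{j-est}), and then multiplies. The key structural point you miss is the ``$+\sqrt2$'' in the pointwise bound: the constant part already accounts for an $O(n)$ baseline in each sum, and only the residual $1/|y_s-x_j|$ piece needs the angular-spacing arguments from Lemma~\ref{lem-zeros}. Fix the numerator computation, keep the weight, and aim for that splitting rather than an exact cancellation.
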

\begin{proof}
	A routine calculation gives
	\begin{equation}\label{Lj-ys}
		|L_j(y_s)|=\left|{(-1)^{s-1}(1+{\rm i}y_s)(1-x_j^2)\over(y_s-x_j)({\rm i}-nx_j)x_jT_n(x_j)}\right|
		={|1-x_j^2|\sqrt{1+y_s^2}\over|(y_s-x_j)({\rm i}-nx_j)|}.
	\end{equation}
	Note that $|1-x_j^2|=|\sin(\a_j+{\rm i}\b_j)|^2=\sin^2\a_j+\sinh^2\b_j$
	and $|x_j|>\max\{|\cos\a_j|,\sinh\b_j\}$. Since $y_s$ is real with $y_s^2<1$ and $\im x_j<0$, we have $|y_s-x_j|\ge |\im x_j|=\sin\a_j\sinh\b_j$,
	and
	\begin{equation}\label{Lj-bound}
		|L_j(y_s)|<{\sqrt2|1-x_j^2|\over n|x_j(y_s-x_j)|}<{\sqrt2\sin^2\a_j\over n|x_j(y_s-x_j)|}
		+{\sqrt2\over n\sin\a_j}<{\sqrt2\sin^2\a_j\over n|x_j(y_s-x_j)|}+\sqrt2,
	\end{equation}	
	where we {have   used}   the inequality $\sin\a_j>\sin[\pi/(n+1)]>2/(n+1)>1/n$.
	Another application of $|y_s-x_j|>\sin\a_j\sinh\b_j$ yields
	$$|L_j(y_s)|-\sqrt2<{\sqrt 2\sin\a_j\over n|x_j|\sinh\b_j}<{\sqrt 2\over n|x_j|\b_j}.$$
	If $\cos^2\a_j\le1/2$, then \eqref{x-a-b} implies that
	$1<(1/2+\sinh^2b_j)(1+\sinh^2b_j).$
	Hence, $b_j>0.4$ and $|x_j|>\sinh\b_j>\b_j>0.4/n$.
	If $\cos^2\a_j>1/2$, then $|x_j|>|\cos\a_j|>1/\sqrt2$ and $\b_j=b_j/n>1/n^2$.
	In either case, we have
	\begin{equation}\label{Lj-On}
		|L_j(y_s)|-\sqrt2=\CO(n),~~1\le j, s\le n.
	\end{equation}	
	{We next} estimate the sum
	$\sum_{k=1}^n|L_k(y_s)|$.
	By symmetry, we assume without loss of generality that $s\le(n+1)/2$.
	If $k<m=\lfloor n/2\rfloor$ and $k\neq s$, then it follows from Lemma \ref{lem-zeros} and \eqref{Lj-bound} that
	\begin{align*}
		|y_s-x_k|>2\sin{\a_k+\th_s\over2}\sin{|\a_k-\th_s|\over2}
		>{2(\a_k+\th_s)|\a_k-\th_s|\over\pi^2}>{2(k+s)(|k-s|-1/2)\over(n+1)^2},
	\end{align*}
	and
	\begin{align}\label{s-new}
		&{\sum}_{k=1,k\neq s}^{m-1}(\sqrt2|L_k(y_s)|-2)
	 	<{\sum}_{k=1,k\neq s}^{m-1}{(n+1)\pi^2(k+1/2)^2\over n(n+1-2k)(k+s)(|k-s|-1/2)}=\CO(n).
	\end{align}
	If $k>n+1-m$, then $\re x_k<0\le y_s$ and $|y_s-x_k|>|y_s-x_{n+1-k}|$.
	Moreover, $|{\rm i}-nx_k|=|{\rm i}-nx_{n+1-k}|$.
	It then follows from \eqref{Lj-ys} that $|L_k(y_s)|<|L_{n+1-k}(y_s)|$.
	This together with \eqref{Lj-On} and \eqref{s-new} implies that
	\begin{equation}\label{s-est}
		{\sum}_{k=1}^n(|L_k(y_s)|-\sqrt2)=\CO(n).
	\end{equation}	
	Finally, we want to estimate $\sum_{s=1}^n(1-y_s^2)|L_j(y_s)|$.
	Since $|L_j(y_s)|=|L_{n+1-j}(y_{n+1-s})|$, it suffices to consider the case $j\le(n+1)/2$; namely, $\a_j\le\pi/2$.
	For $1\le s, j\le (n+1)/2$ with $s\neq j$, we have
	\begin{equation*}
		|y_s-x_j|>2\sin{|\th_s-\a_j|\over2}\sin{\th_s+\a_j\over2}
		>{2(\th_s+\a_j)|\th_s-\a_j|\over\pi^2}>{2(j+s)(|j-s|-1/2)\over(n+1)^2},
	\end{equation*}
	and
	$1-y_s^2=\sin^2\th_s<\th_s^2=s^2\pi^2/(n+1)^2$.
	It then follows from \eqref{Lj-bound} that
	\begin{align*}
		(1-y_s^2)(|L_j(y_s)|-\sqrt2)<{\sqrt2\pi^2s^2\over(\ln n)(j+s)(|j-s|-1/2)}.
	\end{align*}
	By a routine calculation, we obtain from the above inequality and \eqref{Lj-On} that
	\begin{equation}
		{\sum}_{s\le(n+1)/2}(1-y_s^2)[|L_j(y_s)|-\sqrt2]=\CO(n).
	\end{equation}
	If $s\ge(n+1)/2$, then $y_s<0<\re x_j$ and $|y_s-x_j|>|y_s-x_{n+1-j}|$.
	It follows from \eqref{Lj-ys} that $|L_j(y_s)|<|L_j(y_{n+1-s})|$.
	On account of $y_s=-y_{n+1-s}$, we obtain
	\begin{equation}\label{j-est}
		{\sum}_{s=1}^n(1-y_s^2)|L_j(y_s)|<
		2{\sum}_{s\le(n+1)/2}(1-y_s^2)[|L_j(y_s)|-\sqrt2]=\CO(n).
	\end{equation}
	Coupling \eqref{s-est} and \eqref{j-est} gives the desired estimate. 	
\end{proof}
\subsection*{\textit{Proof of Theorem \ref{thmCondU}}.}
\begin{proof}
Denote $s_k:=x_kT_n(x_k)$. It is readily seen that
$s_k^2=1$, $U_{n-1}(x_k)={\rm i}s_k/x_k$ and $U_n(x_k)=(1+{\rm i}x_k)s_k/x_k$.
Recall from \eqref{Vformula}) that $\Phi$ is the main component of the eigenvector matrix $V$ of $\mathbb{B}$.
It then follows from {the} Christoffel-Darboux formula that
$$
(\Phi^*\Phi)_{jk}={\sum}_{l=0}^{n-1}U_l(\bar x_j)U_l(x_k)={U_n(\bar x_j)U_{n-1}(x_k)-U_{n-1}(\bar x_j)U_n(x_k)\over2(\bar x_j-x_k)}
={s_js_k(2{\rm i}+\bar x_j-x_k)\over2\bar x_jx_k(\bar x_j-x_k)},
$$
which together with Lemma \ref{thm-U} implies
\begin{align*}
	{\sum}_{k=1}^n|(\Phi^*\Phi)_{jk}|\le{1\over|x_j|}{\sum}_{k=1}^n\left[{1\over|x_k(\bar x_j-x_k)|}+{1\over2|x_k|}\right]=\CO(n^3),
\end{align*}
and
\begin{equation}\label{U2}
	\|\Phi\|_2=\sqrt{\rho(\Phi^*\Phi)}\le\sqrt{\|\Phi^*\Phi\|_\infty}=\CO(n^{3/2}).
\end{equation}
Let $W=(w_{jk})_{j,k=1}^n=\Phi^{-1}$. We obtain from orthogonality and Gaussian quadrature formula that
\begin{equation} \label{w_jk}
	w_{jk}={2\over\pi}\int_{-1}^1L_j(x)U_{k-1}(x)\sqrt{1-x^2}dx={\sum}_{s=1}^n{2(1-y_s^2)\over n+1}L_j(y_s)U_{k-1}(y_s),
\end{equation}
where $L_j(x)$ are the Lagrange interpolation polynomials given by  \eqref{Lj}.
A simple calculation yields
\begin{align*}
	(WW^*)_{jk}={\sum}_{s=1}^n{2(1-y_s^2)\over n+1}L_j(y_s)\bar L_k(y_s),
\end{align*}
which together with Lemma \ref{thm-W} implies
\begin{equation}\label{W2}
	\|W\|_2=\sqrt{\rho(WW^*)}\le\sqrt{|WW^*|_1}=\CO(n^{1/2}).
\end{equation}
Coupling \eqref{U2} and \eqref{W2} gives
$
{\rm Cond}_2(V)={\rm Cond}_2(\Phi)=\|\Phi\|_2\|W\|_2=\mathcal{O}(n^2).
$
\end{proof}
\section*{Appendix B: A fast $\mathcal{O}(n^2)$ algorithm for computing {$V^{-1}$}.}
{From \eqref{Vformula}, the eigenvector matrix $V$ of   $\mathbb{B}$ satisfies $V={\bf I}\Phi$ with ${\bf I}=\diag\left({\rm i}^0,{\rm i}^1,\cdots,{\rm i}^{n-1}\right)$.  In the diagonalization procedure \eqref{eq1.3}, we need to compute $V^{-1}=\Phi^{-1}{\bf I}^{-1}$ and  the major computation is to get  $W=\Phi^{-1}$.   In this appendix, we present a fast and stable $\mathcal{O}(n^2)$ algorithm for computing $W$ accurately. }

\renewcommand{\thechapter}{B}
\setcounter{section}{0}
\setcounter{theorem}{0}
\renewcommand{\thetheorem}{\thechapter.\arabic{theorem}}
\renewcommand{\thelemma}{\Alph{section}\arabic{lemma}}
\renewcommand{\theequation}{\thechapter.\arabic{equation}}
\setcounter{figure}{0}
\setcounter{table}{0}
\setcounter{equation}{0}

A simple application of the recurrence relation $2yU_j(y)=U_{j+1}(y)+U_{j-1}(y)$ gives
\begin{equation}\label{Uk-DE}
	{\small\begin{split}
		4y_s^2U_{k-1}(y_s)&=2y_s[U_{k-2}(y_s)+U_k(y_s)] =
		\begin{cases}
			U_{k-3}(y_s)+2U_{k-1}(y_s)+U_{k+1}(y_s),&~2\le k\le n-1,\\
			U_{k-1}(y_s)+U_{k+1}(y_s),&~k=1,\\
			U_{k-3}(y_s)+U_{k-1}(y_s),&~k=n.
		\end{cases}
	\end{split}}
\end{equation}
It then follows from (\ref{w_jk}) that
\begin{align}
	2w_{jk}=&{1\over n+1}{\sum}_{s=1}^n4L_j(y_s)U_{k-1}(y_s)-{1\over n+1}{\sum}_{s=1}^n4y_s^2L_j(y_s)U_{k-1}(y_s)
	\notag\\=&\begin{cases}
		2\psi_{j,k}-\psi_{j,k-2}-\psi_{j,k+2},&~2\le k\le n-1,\\
		3\psi_{j,k}-\psi_{j,k-2},&~k=1,\\
		3\psi_{j,k}-\psi_{j,k+2},&~k=n,
	\end{cases}
\end{align}
where $\psi_{j,k}={1\over n+1}{\sum}_{s=1}^nL_j(y_s)U_{k-1}(y_s)$.
{Since $U_n(y_s)=0$, we have $\psi_{j,n+1}=0$ for $j=1,2,\cdots,n$.}
Define
\begin{equation}\label{b-k}
	p_n(x)=U_{n-1}(x)-{\rm i}T_n(x), ~b_k={1\over n+1}{\sum}_{s=1}^np_n(y_s)U_{k-1}(y_s).
\end{equation}
Recall from \eqref{Lj} that $p_n(y_s)=p_n'(x_j)(y_s-x_j)L_j(y_s)$. Therefore,
\begin{align}\label{bk-ak2}
	{2b_k\over p_n'(x_j)}=&{1\over n+1}{\sum}_{s=1}^n2(y_s-x_j)L_j(y_s)U_{k-1}(y_s)
	=\psi_{j,k-1}+\psi_{j,k+1}-2x_j\psi_{j,k}.
\end{align}
To evaluate $b_k$, we  investigate the integral of $p_n(x)U_{k-1}(x)\sqrt{1-x^2}$ on $[-1,1]$.
On account of \eqref{Uk-DE} and \eqref{b-k}, we obtain from the Gaussian quadrature formula that
\begin{align}
	{4\over\pi}\int_{-1}^1p_n(x)U_{k-1}(x)\sqrt{1-x^2}dx
	=&{1\over n+1}\sum_{s=1}^n4(1-y_s^2)p_n(y_s)U_{k-1}(y_s)
	\notag\\=&\begin{cases}
		2b_k-b_{k-2}-b_{k+2},&~2\le k\le n-1,\\
		3b_k-b_{k+2},&~k=1,\\
		3b_k-b_{k-2},&~k=n.
	\end{cases}
\end{align}
On the other hand, it follows from a direct computation based on orthogonality that
\begin{equation}
	{4\over\pi}\int_{-1}^1p_n(x)U_{k-1}(x)\sqrt{1-x^2}dx=2{\d_{n,k}}+{\rm i}\d_{k,n-1}.
\end{equation}
Coupling the above two equations yields a sparse pentadiagonal linear system
\begin{equation}\label{Sb=v}
	S_n \mathbf b:=	\begin{pmatrix}
		3&0&-1\\
		0&2&0&-1\\
		-1&0&2&0&-1\\
		&\ddots&\ddots&\ddots&\ddots&\ddots\\
		&&-1&0&2&0&-1\\
		&&&-1&0&2&0\\
		&&&&-1&0&3
	\end{pmatrix}
	\begin{pmatrix}
		b_1\\b_2\\b_3\\\vdots\\b_{n-2}\\b_{n-1}\\b_n
	\end{pmatrix}
	=\begin{pmatrix}
		0\\0\\0\\\vdots\\0\\{\rm i}\\2
	\end{pmatrix}.
\end{equation}
Let $\Psi=[\psi_{jk}]$. The whole fast inversion algorithm for computing $W=\Phi^{-1}$ is given as the following three steps.\\

\begin{enumerate}
	\item[Step-1:] solve $\mathbf b=[b_1,\cdots,b_n]^\T$ from \eqref{Sb=v}, which costs $\CO(n)$ operations by the fast Thomas algorithm.\\

	\item[Step-2:] Based on the fact $\psi_{j,n+1}=0$, the $j$-th row ${\bm \psi}_j=[\psi_{j,1},\cdots,\psi_{j,n}]$ of $\Psi$ can be solved from a sequence of sparse tridiagonal linear systems (for each $j=1,2,\cdots,n$)
	\begin{equation}\label{Ta=f}
		G_j \mathbf{\bm\psi}_j^\T:=   \mbox{Tridiag}\{1,-2x_j,1\} {\bm \psi}_j^\T
		= \frac{2}{p'_n(x_j)}\mathbf{b},
	\end{equation}
	which in total also costs $\mathcal{O}(n^2)$ operations based on the fast Thomas algorithm for each system.\\
	
	\item[Step-3:] $W=\frac{1}{2}\Psi {S_n}$, which also needs $\mathcal{O}(n^2)$ operations since {$S_n$} is a sparse matrix.
\end{enumerate}

In summary, the dense complex matrix $W=\Phi^{-1}$ can be computed with  $\mathcal{O}(n^2)$ complexity.

 {\tiny
 \bibliographystyle{siam} 
\bibliography{DirectPinT}
}
\end{document}